\documentclass[12pt]{article}

\usepackage[margin=1in]{geometry}
\usepackage{amsmath,amssymb,amsthm}
\usepackage{mathptmx}          
\usepackage[round,authoryear]{natbib}
\usepackage[colorlinks=true,linkcolor=blue,citecolor=blue,urlcolor=blue]{hyperref}

\theoremstyle{plain}
\newtheorem{theorem}{Theorem}[section]
\newtheorem{lemma}[theorem]{Lemma}
\newtheorem{proposition}[theorem]{Proposition}
\newtheorem{corollary}[theorem]{Corollary}
\theoremstyle{definition}
\newtheorem{definition}[theorem]{Definition}
\theoremstyle{remark}
\newtheorem{remark}[theorem]{Remark}

\newcommand{\R}{\mathbb{R}}
\newcommand{\tr}{\mathrm{tr}}
\newcommand{\diag}{\mathrm{diag}}
\newcommand{\iid}{\stackrel{\mathrm{iid}}{\sim}}
\newcommand{\dLS}{\delta_{\mathrm{LS}}}

\title{\bfseries The Equivariance Criterion in a Linear Model\\ for Random-$X$ Cases%
\thanks{\textit{MSC 2020:} Primary 62F10; Secondary 62J05, 62C20.
This article is a companion to Wang, Wu and Zhou, ``The equivariance criterion
in a linear model for fixed-$X$ cases,'' \emph{Communications in Statistics ---
Theory and Methods} 55(14), 4525--4539, 2026, doi: 10.1080/03610926.2025.2610259.}}
\author{Zhengyang Zhang\\
\and
Haojin Zhou\thanks{Corresponding author.
Email: \href{mailto:haojin_zhou@hotmail.com}{haojin\_zhou@hotmail.com}.
ORCiD: 0000-0003-0802-099X.}\\
{\normalsize Pediatric Research Institute}\\
{\normalsize Guangzhou Women and Children's Medical Center}\\
{\normalsize Guangzhou Medical University}\\
{\normalsize Guangzhou, Guangdong, P.R. China}}
\date{}

\begin{document}
\maketitle

\begin{abstract}
\noindent Equivariance is increasingly used in machine learning and statistics, often without systematic justification. In a companion article, the equivariance criterion was applied to the normal linear model with a fixed design matrix (fixed-$X$), yielding the minimum risk equivariant (MRE) estimators of the coefficient vector and of the condensed diagonal covariance matrix under a multivariate invariant location--scale group. We extend these results to the random-$X$ case, with covariates sampled from a population. The extension hinges on a distinction vacuous for fixed-$X$ but fundamental for random-$X$: whether risk and unbiasedness are evaluated conditionally on the realized design or after averaging over the design distribution. Under conditional evaluation, the fixed-$X$ group applies given $X$: least squares remains the best equivariant estimator of the coefficient vector, and the MRE estimators of the population variances keep their fixed-$X$ forms with population sizes at the realized design. Under absolute evaluation with an i.i.d.\ design, the picture changes qualitatively: the natural scale group acting jointly on $(Y,X)$ fixes the coefficient vector, the induced parameter-space action is intransitive, equivariant risks are constant only along orbits indexed by the signal-to-noise ratio $\rho=\|\beta\|^2/\sigma^2$, and no uniformly minimum risk equivariant estimator exists. In the scalar case the optimal equivariant weight is the oracle shrinkage factor $w^*(\rho)=\rho/(\rho+E[T^{-1}])$, with least squares recovered as the infinite-signal limit $\rho\to\infty$---explaining and refining the known failure of the Gauss--Markov theorem with random regressors. For a centered design under location--scale transformations, least squares remains optimal within the natural invariant-contrast class, and the MRE estimator $S^2/(n-p+2)$ of the error variance is valid under both modes.
\end{abstract}

\noindent\textbf{Key words and phrases:} Equivariance; linear model; random-$X$; conditional unbiasedness; minimum risk equivariant estimator; shrinkage; Gauss--Markov theorem.

\section{Introduction}\label{sec:intro}

Consider the linear regression model
\begin{equation}\label{eq:model}
Y = X\beta + \varepsilon,
\end{equation}
where $Y$ is an $n\times 1$ response vector, $X$ is an $n\times p$ design matrix, $\beta\in\R^p$ is the coefficient vector, and $\varepsilon$ is an $n\times 1$ noise vector. Two broad classes of models are distinguished in the literature according to the status of $X$ before sampling. In the \textbf{fixed-$X$} case, the covariate values are fixed before sampling and the only randomness comes from the responses; in the \textbf{random-$X$} case, the rows of $X$ are themselves sampled from a population \citep{BreimanSpector1992,RossetTibshirani2020}. The distinction is not cosmetic: \citet{Shaffer1991} showed that the Gauss--Markov theorem can fail for random regressors, and \citet{RossetTibshirani2020} demonstrated that bias--variance decompositions and prediction-error estimation change qualitatively when the design is random.

In the companion article \citep{WangWuZhou}, the equivariance criterion was applied systematically to the fixed-$X$ normal linear model. Viewing each distinct covariate vector as an independent population, as is natural in experimental design, the fixed-$X$ model was tuned for equivariance by allowing $p$ populations with population-specific variances, and a multivariate invariant location--scale transformation group---distinct for each population---replaced the commonly used univariate one. Under the resulting invariant decision problem, the least squares estimator was shown to be the minimum risk equivariant (MRE) estimator of the coefficient vector $\beta$, and the MRE estimators of the condensed covariance matrix $\Sigma_p=\diag(\sigma_1^2,\dots,\sigma_p^2)$ were shown to be $WS^2$ under the quadratic loss and $S^2$ under the likelihood loss, where $S^2$ collects the sample variances of the $p$ populations and $W=\diag\big((n_i-1)/(n_i+1)\big)$. The approach follows the classical programme of \citet[Chap.~3]{LehmannCasella1998} and \citet[Chap.~6]{Berger1985}: characterize all equivariant estimators through a maximal invariant, exploit transitivity of the induced group on the parameter space to reduce the risk to a constant, and minimize. Related developments include \citet{Eaton1989}, \citet{HoraBuehler1966}, \citet{Wijsman1990}, \citet{WuYang2002}, \citet{KurataMatsuura2016}, and \citet{MatsuuraKurata2020,MatsuuraKurata2021,MatsuuraKurata2024,MatsuuraKurata2025}.

The present article extends this programme to random-$X$. The key observation is that a choice must be made that has no counterpart in the fixed-$X$ theory: whether the performance of an estimator is evaluated \emph{conditionally} on the realized design, or \emph{absolutely}, after averaging over the design distribution. The two choices lead to genuinely different invariant decision problems, and the two corresponding notions of unbiasedness (Definition~\ref{def:unbiased}) organize the entire theory.

\subsection*{Contributions}

\begin{enumerate}
\item \textbf{Conditional framework (Section~\ref{sec:conditional}).} For the multi-population random design (M1), we show that the fixed-$X$ invariant decision problem transfers to random-$X$ realization by realization: the design-adaptive transformation group \eqref{eq:groupM1} preserves the conditional model, the induced group remains transitive almost surely (Lemma~\ref{lem:transitive}), and the maximal-invariant characterization with the associated Basu independence survives conditioning (Lemma~\ref{lem:structure}). Consequently the least squares estimator is MRE for $\beta$ (Theorem~\ref{thm:conditionalLS}), also under absolute risk within the conditionally equivariant class (Corollary~\ref{cor:absoluteLS}), and the MRE estimators of the condensed covariance matrix retain their fixed-$X$ form with weights evaluated at the realized population sizes (Theorem~\ref{thm:conditionalSigma}).
\item \textbf{Absolute framework (Section~\ref{sec:absolute}).} For the i.i.d.\ design (M2), we show that the natural scale group acting jointly on $(Y,X)$ fixes $\beta$, so the induced action is not transitive: equivariant risks are constant only along orbits indexed by the signal-to-noise ratio $\rho=\|\beta\|^2/\sigma^2$ (Proposition~\ref{prop:orbits}). In the scalar normal case we characterize all scale-equivariant estimators as $w(z)\dLS$ and derive the optimal oracle weight $w^*(\rho)=\rho/(\rho+E[T^{-1}])$ (Theorem~\ref{thm:oracle}): no uniformly minimum risk equivariant estimator exists, and least squares is recovered exactly as the infinite-signal limit $\rho\to\infty$ (Corollary~\ref{cor:limit}). Under location--scale transformations with a centered design, least squares remains optimal within the natural invariant-contrast class (Theorem~\ref{thm:centered}).
\item \textbf{Unbiasedness dichotomy.} We formalize conditional versus absolute unbiasedness (Definition~\ref{def:unbiased}) and show the two notions coincide for the MRE estimators of $\beta$ and of $\sigma^2$ within the equivariant classes considered (Remarks~\ref{rem:coincidence} and~\ref{rem:varianceunbiased}).
\item \textbf{Error variance (Section~\ref{sec:variance}).} The MRE estimator $S^2/(n-p+2)$ is shown to be valid under both conditional and absolute evaluation, because the pivotal law $S^2/\sigma^2\sim\chi^2_{n-p}$ is design-free (Theorem~\ref{thm:variance}).
\end{enumerate}

The article is organized as follows. Section~\ref{sec:prelim} fixes the model and notation and recalls the elements of the equivariance criterion. Section~\ref{sec:conditional} develops the conditional theory. Section~\ref{sec:absolute} develops the absolute theory. Section~\ref{sec:variance} treats the error variance. Section~\ref{sec:discussion} concludes. Technical proofs are collected in the Appendix.

\section{Model, notation, and preliminaries}\label{sec:prelim}

\subsection{The random-$X$ normal linear model}

Throughout, the model is \eqref{eq:model} with
\begin{equation}\label{eq:randomX}
\varepsilon \mid X \sim N_n(0,\Sigma(X)), \qquad X \sim P_X,
\end{equation}
where the rows of $X$ are random and $\varepsilon$ is conditionally normal given $X$. Two specifications of the design and of $\Sigma$ are used, matched to the two evaluation modes:
\begin{itemize}
\item \textbf{(M1) Multi-population design (Section~\ref{sec:conditional}).} $X=(X_1,\dots,X_1,\;X_2,\dots,X_2,\;\dots,\;X_p,\dots,X_p)'$, where the $p\times p$ matrix $X_p=(X_1,\dots,X_p)'$ is nonsingular almost surely and the distinct rows $X_i$ are sampled from a population; the repetition counts $n_i$ (with $\sum_i n_i=n$) may be fixed by design or $X$-measurable. The covariance is diagonal with population-specific entries: $\Sigma=\diag(\sigma_1^2 I_{n_1},\dots,\sigma_p^2 I_{n_p})$, and we write $\Sigma_p=\diag(\sigma_1^2,\dots,\sigma_p^2)$.
\item \textbf{(M2) I.i.d.\ homoskedastic design (Sections~\ref{sec:absolute}--\ref{sec:variance}).} The rows $(X_i',Y_i)$ are i.i.d., $X$ has full column rank $p$ almost surely, and $\Sigma=\sigma^2 I_n$ with $\sigma^2>0$ unknown.
\end{itemize}
Model (M1) is the direct random-$X$ counterpart of the fixed-$X$ tuned model of the companion article; model (M2) is the classical random regression setting of \citet{BreimanSpector1992}, \citet{Shaffer1991}, and \citet{RossetTibshirani2020}.

\subsection{The equivariance criterion}

A decision problem is described by the tuple $(\mathcal{X},\mathcal{P},\Theta,D,L)$ with sample space $\mathcal{X}$, distribution family $\mathcal{P}$ indexed by $\theta\in\Theta$, decision space $D$, and loss $L$ on $D\times\Theta$. The problem is \emph{invariant} under a group $G$ of one-to-one and onto transformations if (i) $G$ preserves the family: for each $g\in G$ and $\theta$ there exists $\bar g(\theta)\in\Theta$ with $g(Z)\sim f(\,\cdot\mid\bar g(\theta))$ whenever $Z\sim f(\,\cdot\mid\theta)$; and (ii) the loss is invariant: for each $g$ and $d$ there exists $\tilde g(d)$ with $L(\tilde g(d),\bar g(\theta))=L(d,\theta)$ for all $\theta$. A decision rule $\delta$ is \emph{equivariant} if
\begin{equation}\label{eq:equivariance}
\delta(g(z))=\tilde g(\delta(z)) \quad\text{for all } g\in G \text{ and } z\in\mathcal{X}.
\end{equation}
The MRE rule minimizes the risk $R(\delta,\theta)=E_\theta\,L(\delta,\theta)$ among equivariant rules. When the induced group $\bar G$ is \emph{transitive} on $\Theta$, every equivariant rule has constant risk, the risk can be evaluated at a convenient parameter point $\theta_0$, and equivariant estimators are characterized as $\delta(z)=\delta_0(z)\,w(z)$ (or additively $\delta_0(z)+\omega(z)$), where $\delta_0$ is any equivariant rule and $w$ ranges over functions of a maximal invariant \citep[Chap.~3]{LehmannCasella1998}. Transitivity is therefore the load-bearing assumption of the whole construction---and, as Section~\ref{sec:absolute} shows, exactly what fails for random-$X$ under absolute evaluation, in a way that is itself informative.

\subsection{Two notions of unbiasedness and of risk}

\begin{definition}\label{def:unbiased}
An estimator $\delta(Y,X)$ of a parameter $h(\theta)$ is \emph{conditionally unbiased} if $E_\theta[\delta(Y,X)\mid X]=h(\theta)$ for $P_X$-almost every $X$ and all $\theta$; it is \emph{absolutely unbiased} if $E_\theta[\delta(Y,X)]=h(\theta)$ for all $\theta$.
\end{definition}

\begin{definition}\label{def:risk}
The \emph{conditional risk} of $\delta$ is $R(\delta,\theta\mid X)=E_\theta[L(\delta,\theta)\mid X]$; the \emph{absolute risk} is $R(\delta,\theta)=E_\theta[R(\delta,\theta\mid X)]$ whenever the expectation exists.
\end{definition}

Conditional unbiasedness implies absolute unbiasedness by iterated expectation, provided the absolute expectation exists; the converse is false. Existence is not automatic under random-$X$: expectations involving $(X'X)^{-1}$ are finite only under conditions on $n$, $p$ and $P_X$ (Remark~\ref{rem:moments}).

\section{Conditional unbiasedness: the multi-population design}\label{sec:conditional}

Throughout this section the model is (M1), all risks are conditional on $X$, and all equalities and optimality statements hold for $P_X$-almost every realized design.

\subsection{Preservance of the model and the transformation group}

Given the realized $X$, the population structure is known, and transformations act on the responses only, identically within each population and distinctly across populations:
\begin{equation}\label{eq:groupM1}
G=\{g:\ g(y\mid X)=Cy+a\},\qquad C=\diag(c_1 I_{n_1},\dots,c_p I_{n_p}),\quad a=K a_p,
\end{equation}
with $c_i>0$, $a_i\in\R$, where $K$ is the $n\times p$ indicator matrix with $X=KX_p$ and $a=Ka_p$ as in the companion article. The group acts on the sample given $X$; $X$ itself is untouched, which is precisely the content of conditional evaluation. That $G$ is a group follows as in the fixed-$X$ case (Appendix of the companion article), uniformly in $X$.

The induced action on the parameter space and the induced group are
\begin{equation}\label{eq:inducedM1}
\bar g(\beta,\Sigma_p)=\big(X_p^{-1}C_p X_p\,\beta + X_p^{-1}a_p,\ C_p\Sigma_p C_p'\big),
\end{equation}
with $C_p=\diag(c_1,\dots,c_p)$ and $a_p=(a_1,\dots,a_p)'$.

\begin{lemma}[Transitivity, conditional version]\label{lem:transitive}
Under model (M1), the induced group $\bar G$ acting as in \eqref{eq:inducedM1} is transitive on $\Theta=\R^p\times\R_{+}^{p}$ for $P_X$-almost every $X$.
\end{lemma}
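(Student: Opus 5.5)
Fix a realized design $X$ outside the $P_X$-null set on which $X_p$ is singular, so that $X_p^{-1}$ exists and the action \eqref{eq:inducedM1} is well defined. Transitivity of $\bar G$ on $\Theta=\R^p\times\R_+^p$ means the following. For any two points $(\beta,\Sigma_p)$ and $(\beta^*,\Sigma_p^*)$, with $\Sigma_p=\diag(\sigma_1^2,\dots,\sigma_p^2)$ and $\Sigma_p^*=\diag(\sigma_1^{*2},\dots,\sigma_p^{*2})$, there must exist $c_i>0$ and $a_p\in\R^p$ with $\bar g(\beta,\Sigma_p)=(\beta^*,\Sigma_p^*)$.

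The plan is to solve for $g$ explicitly, one coordinate block at a time.

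First, the scale block. Since $C_p$ and $\Sigma_p$ are both diagonal, $C_p\Sigma_pC_p'=\diag(c_1^2\sigma_1^2,\dots,c_p^2\sigma_p^2)$. Setting $c_i=\sigma_i^*/\sigma_i>0$ matches $\Sigma_p^*$, and this choice is unique within $G$.

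Second, the location block, with $C_p$ now fixed. We need
$$X_p^{-1}C_pX_p\beta+X_p^{-1}a_p=\beta^*,$$
which is solved by
$$a_p=X_p\beta^*-C_pX_p\beta=X_p\beta^*-C_pX_p\beta\in\R^p.$$
This step is where nonsingularity of $X_p$ is used: it makes the location part of $\bar G$ all translations of $\R^p$, since $a_p\mapsto X_p^{-1}a_p$ is onto.

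Third, I would briefly confirm that \eqref{eq:inducedM1} is indeed the induced action, so that the $g$ constructed above genuinely lies in $G$ and maps the model at the first parameter point to the model at the second. Given $X$, we have $Y\sim N_n(KX_p\beta,\Sigma)$. Because $CK=KC_p$, the transformed response $CY+Ka_p$ has mean $K(C_pX_p\beta+a_p)=KX_p\big(X_p^{-1}C_pX_p\beta+X_p^{-1}a_p\big)$ and covariance $C\Sigma C=\diag(c_i^2\sigma_i^2 I_{n_i})$. This is exactly the model at $\bar g(\beta,\Sigma_p)$.

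Finally, since the construction works for every $X$ outside the null set $\{\det X_p=0\}$, the conclusion holds for $P_X$-almost every $X$. The only subtlety, which is the main obstacle such as it is, concerns the repetition counts $n_i$ when they are $X$-measurable. I would note that they enter only through $K$, which is fixed once $X$ is given, so they play no role in solving for $(C_p,a_p)$.
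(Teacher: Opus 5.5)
Your proof is correct and follows the paper's argument: on the full-measure event $\{\det X_p\neq 0\}$ you take $C_p=(\Sigma_p^*)^{1/2}\Sigma_p^{-1/2}$ and $a_p=X_p\beta^*-C_pX_p\beta$, which is exactly the paper's choice. Your extra verification, via $CK=KC_p$, that \eqref{eq:inducedM1} really is the induced action is a useful addition that the paper takes for granted.
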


\begin{proof}
Given $X$, the argument of the fixed-$X$ case applies verbatim, since $X_p$ is nonsingular almost surely: for any $(\beta^{(1)},\Sigma_p^{(1)})$, $(\beta^{(2)},\Sigma_p^{(2)})$ choose $C_p=(\Sigma_p^{(2)})^{1/2}(\Sigma_p^{(1)})^{-1/2}$ and $a_p=X_p\beta^{(2)}-C_pX_p\beta^{(1)}$; then $\bar g(\beta^{(1)},\Sigma_p^{(1)})=(\beta^{(2)},\Sigma_p^{(2)})$.
\end{proof}

\begin{remark}\label{rem:fullrank}
Lemma~\ref{lem:transitive} is the point at which the almost-sure full-rank assumption on $X_p$ is used. If $P_X$ is discrete with ties beyond the population structure, the population partition must be defined by the \emph{distinct realized values}; all statements below are then understood conditionally on that partition.
\end{remark}

\subsection{Estimation of the coefficient vector}

Following \citet{Staudte1971} and \citet{ZhouNayak2014}, the invariant loss for estimating $\beta$ is
\begin{equation}\label{eq:lossbeta}
L_\beta(d,\beta;\Sigma_p)=(d-\beta)'\,X_p'\Sigma_p^{-1}X_p\,(d-\beta),
\end{equation}
which is invariant under $G$ with the induced action on the decision space $\tilde g(d)=X_p^{-1}C_pX_p\,d+X_p^{-1}a_p$. The equivariance condition \eqref{eq:equivariance} reads
\begin{equation}\label{eq:equivbeta}
\delta(Cy+a,X)=X_p^{-1}C_pX_p\,\delta(y,X)+X_p^{-1}a_p.
\end{equation}
Write $\bar y=(\bar Y_1,\dots,\bar Y_p)'$ for the vector of within-population sample means, $S(y)=\diag(s_1,\dots,s_p)$ with $s_i^2=(n_i-1)^{-1}\sum_{j}(Y_{ij}-\bar Y_i)^2$ the within-population sample standard deviations, and let $z$ be the fixed-$X$ maximal invariant of the companion article (the vector of normalized within-population contrasts together with the sign of a reference contrast, built population by population).

\begin{lemma}[Structure and independence, conditional version]\label{lem:structure}
Under model (M1), for $P_X$-almost every $X$:
\begin{enumerate}
\item[(i)] the least squares estimator satisfies $(X'X)^{-1}X'y=X_p^{-1}\bar y$ and is equivariant under \eqref{eq:equivbeta};
\item[(ii)] an estimator is equivariant under \eqref{eq:equivbeta} if and only if
\begin{equation}\label{eq:characterization}
\delta(y,X)=(X'X)^{-1}X'y+X_p^{-1}S(y)\,\omega(z)
\end{equation}
for some function $\omega:\R^{n-p}\to\R^p$;
\item[(iii)] $(X'X)^{-1}X'y$, $S^2(y)$ and $z$ are pairwise independent conditionally on $X$.
\end{enumerate}
\end{lemma}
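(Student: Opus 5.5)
Since every claim in Lemma~\ref{lem:structure} is asserted for $P_X$-almost every $X$, I fix a realized design with $X_p$ nonsingular and known population sizes $n_i\ge 2$, and argue entirely conditionally. Given $X$, the response $y$ is exactly the fixed-$X$ tuned model of the companion article: $Y_{ij}\mid X\sim N(X_i'\beta,\sigma_i^2)$, independent across $j$ and $i$. The action \eqref{eq:groupM1} and the equivariance condition \eqref{eq:equivbeta} also coincide with their fixed-$X$ counterparts. The plan is therefore to verify each part directly in this frozen-$X$ setting, quoting the fixed-$X$ arguments where they apply verbatim, and to check that nothing depends on $X$ beyond $X_p$, $K$ and the $n_i$.

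For (i), I use $X=KX_p$ and $K'K=N:=\diag(n_1,\dots,n_p)$. This gives $X'X=X_p'NX_p$ and $X'y=X_p'K'y=X_p'N\bar y$, so $(X'X)^{-1}X'y=X_p^{-1}N^{-1}(X_p')^{-1}X_p'N\bar y=X_p^{-1}\bar y$. For equivariance, the map $y\mapsto Cy+Ka_p$ sends $\bar y$ to $C_p\bar y+a_p$, and hence sends $X_p^{-1}\bar y$ to $X_p^{-1}C_pX_p(X_p^{-1}\bar y)+X_p^{-1}a_p$, which is \eqref{eq:equivbeta}.

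For (ii), sufficiency is a direct check. Under $g$, each $s_i$ becomes $c_is_i$, so $S(Cy+a)=C_pS(y)$, and $z$ is invariant. Hence $X_p^{-1}C_pS\omega(z)=X_p^{-1}C_pX_p\cdot X_p^{-1}S\omega(z)$, which is the correct transformation of the added term. For necessity, given an equivariant $\delta$, I set $u(y)=X_pS(y)^{-1}\{\delta(y,X)-X_p^{-1}\bar y\}$. Subtracting the equivariance relations for $\delta$ and for least squares shows that $X_p\delta-\bar y$ transforms as $v\mapsto C_pv$. Since $S$ also picks up $C_p$ on the left, $u$ is invariant under $G$. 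Because $z$ is a maximal invariant (taken from the companion article, since its construction is population by population and uses only the $n_i$), $u$ is a function of $z$, which gives \eqref{eq:characterization}. This holds on the full-measure set where all $s_i>0$. The dimension count $\sum_i(n_i-1)-p+p$ is handled by the companion's definition of $z$; I would cite that rather than rebuild it.

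For (iii), conditionally on $X$ the populations are independent normal samples. Within population $i$, $\bar Y_i$ and the residual vector are independent, and by Basu's theorem $s_i$ and the normalized contrasts are independent as well: for fixed $\mu_i$, $\bar Y_i, s_i^2$ is complete sufficient, and the normalized contrasts are ancillary. Across populations everything is independent, so $\bar y$, $S^2$ and $z$ are in fact mutually independent, and in particular pairwise independent; (i) then transfers this to the least squares estimator. The main obstacle is bookkeeping rather than substance. One must check that the maximal-invariant property of $z$ and the measurability of $\omega$ hold uniformly in $X$ and on a full-measure set where no $s_i=0$, and that the reference-sign component of $z$ remains ancillary under Basu's theorem. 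I would handle this by noting that, given $X$, all objects are exactly the companion's fixed-$X$ objects with the fixed design replaced by the realized $X$, so the fixed-$X$ proofs apply pointwise in $X$.
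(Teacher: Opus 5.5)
Your proposal is correct and is essentially the paper's argument: freeze $X$, obtain (i)--(ii) from the fixed-$X$ theory (you verify them directly where the paper just cites the companion's Lemma~2.1 and characterization theorem), and get (iii) from Basu's theorem, which you apply population by population instead of once to the whole conditional model with $(\bar y,S^2)$ complete sufficient for $(X_p\beta,\Sigma_p)$; either way one obtains mutual, not just pairwise, independence. There are two slips to fix: the invariant in the necessity step must be $u=S(y)^{-1}X_p\{\delta-X_p^{-1}\bar y\}=S(y)^{-1}(X_p\delta-\bar y)$ (as written, $X_pS(y)^{-1}\{\cdots\}$ is not invariant because $X_p$ does not commute with $C_p$), and $(\bar Y_i,s_i^2)$ is complete only when $(\mu_i,\sigma_i^2)$ ranges freely, not ``for fixed $\mu_i$'' (it does range freely here because $X_p\beta$ fills $\R^p$).
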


\begin{proof}
Given $X$, (i) is Lemma~2.1 of the companion article; (ii) is its characterization theorem; (iii) is proved in Appendix~\ref{app:basu}.
\end{proof}

\begin{theorem}\label{thm:conditionalLS}
Under model (M1) with the loss \eqref{eq:lossbeta}, the least squares estimator $(X'X)^{-1}X'y$ is the best equivariant (MRE) estimator of $\beta$ under conditional risk, for $P_X$-almost every $X$.
\end{theorem}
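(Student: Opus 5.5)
We work conditionally on the realized design throughout, fixing an $X$ in the $P_X$-probability-one set on which $X_p$ is nonsingular and Lemmas~\ref{lem:transitive} and~\ref{lem:structure} hold. By Lemma~\ref{lem:transitive}, $\bar G$ is transitive on $\Theta$. Because the loss \eqref{eq:lossbeta} is invariant, the conditional risk $R(\delta,\theta\mid X)$ of every equivariant $\delta$ is therefore constant in $\theta$. This lets us evaluate the risk at the convenient point $\theta_0=(\beta,\Sigma_p)=(0,I_p)$.

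By Lemma~\ref{lem:structure}(ii), every equivariant estimator has the form $\delta=\dLS+X_p^{-1}S(y)\,\omega(z)$, where $\dLS=(X'X)^{-1}X'y=X_p^{-1}\bar y$ by part (i). Substituting this into the loss at $\theta_0$ gives
\begin{equation*}
L_\beta(\delta,0;I_p)=\|X_p\dLS+S(y)\omega(z)\|^2=\|\bar y\|^2+2\,\bar y'S(y)\omega(z)+\|S(y)\omega(z)\|^2 .
\end{equation*}
Next we take conditional expectations given $X$ under $\theta_0$. Lemma~\ref{lem:structure}(iii) says that $\dLS$, $S^2(y)$ and $z$ are pairwise independent given $X$. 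The cross term needs more than pairwise independence: we want $\bar y$ to be independent of the pair $(S(y),z)$, and we will get it as follows. Under normality with populations independent given $X$, the within-population means are independent of the within-population residual vectors. Both $S(y)$ and $z$ are functions of those residual vectors, since $z$ is built from normalized within-population contrasts together with a sign. Hence $\bar y\perp (S,z)$ given $X$. Since $E[\bar y\mid X]=X_p\beta=0$ at $\theta_0$, the cross term has conditional expectation zero. The conditional risk then becomes
\begin{equation*}
E[\|\bar y\|^2\mid X]+E\big[\|S(y)\omega(z)\|^2\mid X\big]\;\ge\;E[\|\bar y\|^2\mid X]=R(\dLS,\theta_0\mid X).
\end{equation*}

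The lower bound is attained at $\omega\equiv0$, that is, by $\dLS$. Constancy of risk then transfers this optimality from $\theta_0$ to every $\theta$, which proves the theorem for $P_X$-almost every $X$. Uniqueness holds up to null sets: equality forces $S(y)\omega(z)=0$ a.s., and since $s_i>0$ a.s. when $n_i\ge2$, this gives $\omega=0$ a.s.

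The main obstacle is the cross-term step. As stated, Lemma~\ref{lem:structure}(iii) gives only pairwise independence, which does not by itself make $E[\bar y' S\omega(z)\mid X]$ vanish. The remedy, as sketched above, is to return to the within-population construction and argue the joint independence of $\bar y$ from $(S,z)$ directly from normal theory. A secondary point is measurability in $X$ of the almost-sure exceptional set. This is harmless, because each step holds for every realized $X$ with $X_p$ nonsingular, and that event has probability one.
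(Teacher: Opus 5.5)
Your proof is correct and follows the paper's argument: transitivity reduces the problem to $\theta_0=(0,I_p)$, the characterization \eqref{eq:characterization} makes $X_p$ cancel so the loss becomes $\|\bar y+S(y)\omega(z)\|^2$, and the cross term vanishes, leaving the bound $E[\bar y'\bar y\mid X]$, which is attained at $\omega\equiv 0$. Your extra care on the cross term is well placed. The paper simply asserts $\bar y\perp(S(y),z)$ given $X$; this does follow from its Basu argument in Appendix~\ref{app:basu}, since $(\bar y,S^2)\perp z$ together with $\bar y\perp S^2$ gives mutual independence, and your direct route through the within-population residual vectors is an equally valid justification.
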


\begin{proof}
By Lemma~\ref{lem:transitive} the conditional risk of any equivariant estimator is constant in $(\beta,\Sigma_p)$ for almost every $X$; evaluate it at $\theta_0=(0,I_p)$. With \eqref{eq:characterization},
\begin{equation*}
R(\delta\mid X)=E_{\theta_0}\big[\,(\bar y+S(y)\omega(z))'(\bar y+S(y)\omega(z))\,\big|\,X\big],
\end{equation*}
since $X_p'X_p$ cancels $X_p^{-1}$ on both sides of \eqref{eq:characterization}. Expanding and using Lemma~\ref{lem:structure}(iii), the cross term vanishes because $E[\bar y\mid X]=0$ at $\theta_0$ and $\bar y$ is independent of $(S(y),z)$ given $X$:
\begin{equation*}
R(\delta\mid X)=E_{\theta_0}[\bar y'\bar y\mid X]+E_{\theta_0}\big[\omega(z)'S^2(y)\,\omega(z)\,\big|\,X\big]\ \ge\ E_{\theta_0}[\bar y'\bar y\mid X],
\end{equation*}
with equality if and only if $S(y)\omega(z)=0$ a.s.; the choice $\omega^*\equiv 0$ is admissible and yields $\delta^*(y,X)=(X'X)^{-1}X'y$.
\end{proof}

\begin{corollary}\label{cor:absoluteLS}
The least squares estimator is also MRE under absolute risk with respect to the class \eqref{eq:characterization}, whenever the absolute risk is finite: $R(\delta)=E_X R(\delta\mid X)\ge E_X R(\delta^*\mid X)=R(\delta^*)$.
\end{corollary}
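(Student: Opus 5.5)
The corollary follows by averaging the pointwise conditional inequality of Theorem~\ref{thm:conditionalLS} over $P_X$. Fix any estimator $\delta$ in the class \eqref{eq:characterization}, that is, $\delta(y,X)=(X'X)^{-1}X'y+X_p^{-1}S(y)\,\omega(z)$ for some measurable $\omega$. Here $\omega$ may depend on $X$ only through the realized partition, as in Remark~\ref{rem:fullrank}. Let $\delta^*$ denote least squares. The proof has three steps.

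\textbf{Step 1: measurability.} We first check that $X\mapsto R(\delta,\theta\mid X)$ is a measurable function of the design. This follows because the conditional expectation of the nonnegative loss \eqref{eq:lossbeta} is a regular conditional expectation. The absolute risk $R(\delta,\theta)=E_X R(\delta,\theta\mid X)$ of Definition~\ref{def:risk} is then well defined in $[0,\infty]$.

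\textbf{Step 2: the pointwise inequality.} From the proof of Theorem~\ref{thm:conditionalLS}, for $P_X$-almost every $X$ and every $\theta$,
\begin{equation*}
R(\delta,\theta\mid X)=R(\delta^*,\theta\mid X)+E_{\theta_0}\big[\omega(z)'S^2(y)\,\omega(z)\,\big|\,X\big]\ \ge\ R(\delta^*,\theta\mid X).
\end{equation*}
The identity holds because, by Lemma~\ref{lem:transitive}, conditional risks of equivariant rules do not depend on $\theta$. The decomposition then uses the conditional independence in Lemma~\ref{lem:structure}(iii).

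\textbf{Step 3: integration.} Since both sides are nonnegative, integrating over $P_X$ with monotonicity of the integral gives $R(\delta,\theta)\ge R(\delta^*,\theta)$. This holds with values in $[0,\infty]$, so no integrability is needed for the inequality itself. The finiteness hypothesis is used only to make the comparison meaningful and to read off the gap: if $R(\delta^*)<\infty$, then $R(\delta)-R(\delta^*)=E_X E_{\theta_0}[\omega'S^2\omega\mid X]\ge 0$.

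For the equality case, equality forces $S(y)\omega(z)=0$ almost surely under the joint law. So least squares is the essentially unique minimizer among rules of the form \eqref{eq:characterization}.

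The main point needing care is not the inequality but the \emph{uniformity in $\theta$} of the absolute statement. The conditional risk is constant in $\theta$ only because the group \eqref{eq:groupM1} is allowed to depend on $X$. After averaging, the absolute risk is still constant in $\theta$, because it is an average of functions constant in $\theta$. This is why the conclusion is stated relative to the conditionally equivariant class rather than to a group acting on $(Y,X)$ jointly. I would state this explicitly, and also note the null-set bookkeeping: the countably many exceptional $P_X$-null sets arising from Lemmas~\ref{lem:transitive} and~\ref{lem:structure} have a null union, so the pointwise inequality holds off a single null set before integration.
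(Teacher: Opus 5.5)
Your proposal is correct and follows the paper's own argument: the paper's proof is just the averaging $R(\delta)=E_X R(\delta\mid X)\ge E_X R(\delta^*\mid X)=R(\delta^*)$ of the pointwise inequality from Theorem~\ref{thm:conditionalLS} over the $\theta$-free design law $P_X$. Your additions are sound refinements rather than a different route: treating the risks as $[0,\infty]$-valued so the inequality needs no integrability, the null-set bookkeeping, and the equality case; note also that the pointwise argument works even if $\omega$ depends measurably on $X$ in an arbitrary way, so you do not need to restrict it to dependence through the realized partition.
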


\begin{remark}[Why this is not merely a restatement of the fixed-$X$ result]\label{rem:notrestated}
The transformation group \eqref{eq:groupM1} and the loss \eqref{eq:lossbeta} are built from the realized $X$: the group is \emph{design-adaptive}. Conditional equivariance is therefore the strongest notion of equivariance available under random-$X$ that still acts on the responses alone. Corollary~\ref{cor:absoluteLS} shows that no equivariant estimator of the form \eqref{eq:characterization} can improve on least squares even after averaging over the design; improvements are only possible outside this class, which is the subject of Section~\ref{sec:absolute}.
\end{remark}

\begin{remark}[Coincidence of the two unbiasedness notions for $\beta$]\label{rem:coincidence}
For the least squares estimator, $E[\dLS\mid X]=\beta$ for almost every $X$, hence also $E[\dLS]=\beta$ whenever the absolute expectation exists: the MRE estimator is simultaneously conditionally and absolutely unbiased. Within the wider class \eqref{eq:characterization}, conditional unbiasedness is a genuine restriction, since $E[\delta\mid X]=\beta+X_p^{-1}E[S(y)\omega(z)\mid X]$ and the second term need not vanish.
\end{remark}

\subsection{Estimation of the condensed covariance matrix}

Assume $n_i\ge 2$ for all $i$ almost surely, so that $\Sigma_p$ is estimable given $X$. Consider the two invariant losses of the companion article, the quadratic loss $L_q(D,\Sigma_p)=\tr\big((D-\Sigma_p)\Sigma_p^{-1}(D-\Sigma_p)\Sigma_p^{-1}\big)$ and the likelihood loss $L_l(D,\Sigma_p)=\tr(D\Sigma_p^{-1})-\log|D\Sigma_p^{-1}|-p$, with the induced action $\tilde g(D)=C_pDC_p'$.

\begin{theorem}\label{thm:conditionalSigma}
Under model (M1), for $P_X$-almost every $X$, the MRE estimators of $\Sigma_p$ under conditional risk are
\begin{equation*}
W(X)\,S^2 \quad \text{under } L_q, \qquad S^2 \quad \text{under } L_l,
\end{equation*}
where $S^2=\diag(s_1^2,\dots,s_p^2)$ and $W(X)=\diag\big((n_i-1)/(n_i+1)\big)$ with the population sizes $n_i=n_i(X)$ evaluated at the realized design.
\end{theorem}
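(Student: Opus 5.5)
Conditioning on the realized design $X$ reduces the problem to the fixed-$X$ covariance problem of the companion article, so the plan is to follow the pattern of Theorem~\ref{thm:conditionalLS}. The group \eqref{eq:groupM1} acts on $\Sigma_p$ through $\Sigma_p\mapsto C_p\Sigma_pC_p'$, and both $L_q$ and $L_l$ are invariant under $D\mapsto C_pDC_p'$, because they depend only on $\Sigma_p^{-1/2}D\Sigma_p^{-1/2}$ and $C_p$ is diagonal. The first step is therefore to characterize the equivariant estimators. The condition $\delta(Cy+a,X)=C_p\,\delta(y,X)\,C_p$ for all diagonal $C_p>0$ and all location shifts $Ka_p$ forces $\delta$ to be diagonal, since $D$ must lie in the decision space of diagonal matrices. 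Given $X$, the companion characterization then yields the form
\[
\delta(y,X)=S(y)\,V(z)\,S(y)=\diag\big(s_i^2 v_i(z)\big),
\]
where $v_i\ge 0$ are functions of the maximal invariant $z$. The location part is absorbed because the $s_i$ are location invariant, and the scale part is absorbed because $s_i\mapsto c_is_i$.

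By Lemma~\ref{lem:transitive}, the conditional risk of any such estimator is constant for almost every $X$, so I will evaluate it at $\Sigma_p=I_p$. Under $L_q$ the risk becomes $\sum_i E[(s_i^2v_i(z)-1)^2\mid X]$, and under $L_l$ it becomes $\sum_i E[s_i^2v_i(z)-\log(s_i^2v_i(z))-1\mid X]$; both losses separate over $i$ because all matrices involved are diagonal. Lemma~\ref{lem:structure}(iii) gives conditional independence of $S^2$ and $z$ given $X$. Conditioning on $z$, each summand is therefore minimized pointwise in $v_i$ using only moments of $s_i^2$. Given $X$, $(n_i-1)s_i^2\sim\chi^2_{n_i-1}$, because the within-population residuals are normal with variance $1$ and $n_i=n_i(X)$ is fixed once $X$ is.

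Under $L_q$ the minimizer is
\[
v_i=\frac{E[s_i^2]}{E[s_i^4]}=\frac{1}{1+2/(n_i-1)}=\frac{n_i-1}{n_i+1}.
\]
Under $L_l$, setting the derivative $E[s_i^2]-1/v_i$ to zero gives $v_i=1/E[s_i^2]=1$. Both minimizers are constant in $z$, so they are admissible members of the equivariant class. This yields $W(X)S^2$ and $S^2$, respectively, with $n_i$ evaluated at the realized design. The statement holds for $P_X$-almost every $X$ because every step was performed conditionally on $X$ on the full-measure set where $X_p$ is nonsingular and $n_i\ge 2$.

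The main obstacle is the characterization step. I must check that the companion's fixed-$X$ characterization, and in particular the restriction to diagonal decisions with the factorization through $S(y)$ and $z$, transfers under conditioning. This requires the $X$-measurability of the partition $\{n_i\}$ (Remark~\ref{rem:fullrank}), so that the group, $z$, and $S$ are well defined realization by realization. I will handle it exactly as in Lemma~\ref{lem:structure}: fix $X$, invoke the companion theorem verbatim, and note that measurability in $X$ is preserved because the optimal $v_i$ are explicit functions of $n_i(X)$.
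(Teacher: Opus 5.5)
Your proposal is correct and follows the same route as the paper. You fix the realized $X$, reduce to the fixed-$X$ covariance problem, and read off the weights from the $\chi^2_{n_i-1}$ moments $E[s_i^2\mid X]=1$ and $E[s_i^4\mid X]=(n_i+1)/(n_i-1)$ at $\Sigma_p=I_p$, with $n_i=n_i(X)$. The paper simply cites Theorem~2.3 of the companion article for the middle step, whereas you unpack it (the class $\diag(s_i^2v_i(z))$, transitivity, conditional independence of $S^2$ and $z$, and pointwise minimization), which is the same argument made explicit.
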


\begin{proof}
Given $X$, Theorem~2.3 of the companion article applies to the conditional model and delivers the MRE estimators with the constants $(n_i-1)/(n_i+1)$ determined by the chi-square moments $E[s_i^2\mid X]=\sigma_i^2$ and $E[s_i^4\mid X]=\sigma_i^4(n_i+1)/(n_i-1)$, all evaluated at the realized $n_i(X)$. Since the optimizer depends on $X$ only through the $X$-measurable weights $W(X)$, the resulting estimator is a legitimate statistic under random-$X$, and it minimizes the conditional risk almost surely, hence also the absolute risk among the equivariant class $\Delta(y,X)=H(z)S^2$.
\end{proof}

\begin{remark}\label{rem:weights}
Theorem~\ref{thm:conditionalSigma} makes precise one way in which random-$X$ \emph{does} change the answer even under conditional evaluation: the MRE weights are functions of the realized population sizes. For an i.i.d.\ continuous design there are almost surely no ties, $n_i\equiv 1$, and $\Sigma_p$ is not estimable---the multi-population model (M1) presupposes a design in which covariate values are repeated, as in experimental or stratified settings. This is the bridge to model (M2), where only the common variance $\sigma^2$ is estimable (Section~\ref{sec:variance}).
\end{remark}

\section{Absolute unbiasedness: the i.i.d.\ design}\label{sec:absolute}

Throughout this section the model is (M2): the rows $(X_i',Y_i)$ are i.i.d., $X$ has rank $p$ almost surely, $\varepsilon\mid X\sim N_n(0,\sigma^2 I_n)$, and risks are absolute. Transformations may now act on the covariates as well as the responses, and the group must be specified jointly on $(Y,X)$.

\subsection{The scale group: orbits, non-existence, and the oracle shrinkage}\label{sec:scale}

The natural scale group acting jointly and identically on the response and the covariates is
\begin{equation}\label{eq:groupS}
G_S=\{g_b:\ g_b(Y,X)=(bY,\ bX),\ b>0\}.
\end{equation}
Since $bY=bX\beta+b\varepsilon$, the model family is preserved with the induced action
\begin{equation}\label{eq:inducedS}
\bar g_b(\beta,\sigma^2)=(\beta,\ b^2\sigma^2),
\end{equation}
and the loss $L(d,\beta,\sigma^2)=\|d-\beta\|^2/\sigma^2$ is invariant with the induced decision action $\tilde g_b(d)=d$. Note that the least squares estimator is equivariant: $\dLS(bY,bX)=\dLS(Y,X)=\tilde g_b(\dLS(Y,X))$.

\begin{proposition}[Orbit structure and non-existence of a UMRUE]\label{prop:orbits}
Under model (M2) with the group \eqref{eq:groupS}:
\begin{enumerate}
\item[(i)] the induced action \eqref{eq:inducedS} is not transitive on $\Theta=\R^p\times\R_+$; its orbits are $\mathcal{O}_\beta=\{(\beta,b^2\sigma^2):b>0\}$, parametrized by $\beta$;
\item[(ii)] every equivariant estimator has risk constant along each orbit, so the risk is a function of $\beta$ only through the signal-to-noise ratio $\rho=\|\beta\|^2/\sigma^2$;
\item[(iii)] consequently, unless the optimal equivariant rule happens to coincide across orbits, no uniformly minimum risk equivariant estimator exists. Theorem~\ref{thm:oracle} shows the optimal rule does vary with $\rho$.
\end{enumerate}
\end{proposition}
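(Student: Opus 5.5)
The plan is to follow the standard invariance argument, with one preliminary check. First I will verify that $G_S$ really preserves the model. If $(Y,X)$ follows (M2) with parameter $(\beta,\sigma^2)$ and design law $P_X$, then $(bY,bX)$ has rows $(bX_i',bY_i)$ that are still i.i.d. Moreover $bY=(bX)\beta+b\varepsilon$ with $b\varepsilon\mid bX\sim N_n(0,b^2\sigma^2 I_n)$, and $bX$ has full column rank almost surely.

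There is one subtlety. The design law is mapped to the law of $bX$, so the family is preserved only if the class of admissible $P_X$ is closed under scaling. I will state this explicitly: either $P_X$ ranges over a scale family, or it is treated as a nuisance component carried along by $\bar g_b$. Its dependence enters through the pivot $X/\sigma$, as below. Under this convention \eqref{eq:inducedS} holds. The invariance of $L(d,\beta,\sigma^2)=\|d-\beta\|^2/\sigma^2$ under $\tilde g_b(d)=d$ fails outright, since $\|d-\beta\|^2/(b^2\sigma^2)\neq\|d-\beta\|^2/\sigma^2$. So I will check it carefully, and if necessary replace it by the loss $\|d-\beta\|^2$ or $\|d-\beta\|^2\,\mathrm{scale}(X)$, whichever is consistent with $\tilde g_b(d)=d$.

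For (i), the orbit of $(\beta,\sigma^2)$ under \eqref{eq:inducedS} is $\{(\beta,b^2\sigma^2):b>0\}=\{\beta\}\times\R_+$, since $b\mapsto b^2\sigma^2$ is onto $\R_+$. Two points with different $\beta$ therefore lie in different orbits, so $\bar G_S$ is not transitive on $\R^p\times\R_+$, and the orbits are indexed by $\beta$.

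For (ii), I will use the standard identity $R(\delta,\bar g\theta)=R(\delta,\theta)$ for equivariant $\delta$ and invariant $L$. It follows from
\[
E_{\bar g\theta}L(\delta(Z),\bar g\theta)=E_\theta L(\delta(gZ),\bar g\theta)=E_\theta L(\tilde g\delta(Z),\bar g\theta)=E_\theta L(\delta(Z),\theta).
\]
Hence the risk is constant on each $\mathcal O_\beta$. To pass to $\rho$, I will write $Y=\sigma(X/\sigma)(\beta/\sigma)+\sigma Z$ with $Z\sim N_n(0,I_n)$ independent of $X$. Choosing $b=1/\sigma$ expresses the risk as a functional of the law of $(X/\sigma,\,\beta/\sigma,\,Z)$.

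The step where I expect the main obstacle is reducing this further to $\|\beta\|^2/\sigma^2$. Scale alone gives dependence on the pair $(\beta,\sigma^2)$ only through the orbit. To collapse the direction of $\beta$, I would add orthogonal invariance: a spherically symmetric row law and orthogonally equivariant $\delta$. In the scalar case $p=1$ that Theorem~\ref{thm:oracle} treats, sign symmetry ($X\mapsto -X$) reduces $\beta/\sigma$ to $\beta^2/\sigma^2=\rho$. I will state (ii) in that form and note this assumption.

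For (iii), I will argue as follows. If the minimizer of $R(\cdot,\theta)$ over the equivariant class depends on the orbit, then no single equivariant rule attains the pointwise minimum on every orbit, so no uniformly minimum risk equivariant estimator exists. The dependence on $\rho$ is supplied by Theorem~\ref{thm:oracle}, whose optimal weight $w^*(\rho)$ is nonconstant.
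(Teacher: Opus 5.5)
The gap is in (ii), at the passage to $\rho$, and it is not a matter of collapsing the direction of $\beta$. Your normalization is off: $\sigma(X/\sigma)(\beta/\sigma)=X\beta/\sigma$, not $X\beta$. The correct identity is $Y/\sigma=(X/\sigma)\beta+Z$. So applying $g_{1/\sigma}$ leaves $\beta$ itself untouched, as it must since $\bar g_b$ fixes $\beta$, and the risk is a functional of the law of $(X/\sigma,\beta,Z)$. The ratio $\beta/\sigma$ never appears. Sign or orthogonal symmetry act only on the sign or direction of $\beta$, so they cannot produce $\rho$.

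No patch within $G_S$ can, because $\rho$ is not an invariant of $\bar g_b$. On $\mathcal{O}_\beta$ with $\beta\neq0$, the value $\|\beta\|^2/(b^2\sigma^2)$ sweeps out all of $(0,\infty)$. A risk that is constant on every $\mathcal{O}_\beta$ and is also a function $f(\rho)$ therefore has $f$ constant. That contradicts Theorem~\ref{thm:oracle}, the very input you use for (iii). A concrete witness: since $\tilde g_b$ is the identity, every constant estimator is equivariant. The rule $\delta\equiv0$ has risk $\rho$ under the stated loss, which is a function of $\rho$ but not constant on $\mathcal{O}_\beta$; this is the non-invariance you correctly detected. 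Under your replacement loss $\|d-\beta\|^2$ it has risk $\|\beta\|^2$, which is constant on $\mathcal{O}_\beta$ but not a function of $\rho$. As long as the orbits are the sets $\mathcal{O}_\beta$ of (i), the two clauses of (ii) cannot hold together for all equivariant rules, since together they make every equivariant risk flat in $\rho$. Your plan proves at most the first clause.

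Your (i) and your orbit-constancy identity are the paper's argument. The paper's proof makes the same leap to $\rho$ (``after reduction by the group \dots\ the formal argument is standard''). It also takes for granted the loss invariance and the preservation of $P_X$, both of which your preliminary checks rightly question. What $G_S$ actually delivers is (i) plus orbit-constancy, once $P_X$ is a scale family carried by the group ($\sigma_x\mapsto b\sigma_x$) and the loss is genuinely invariant, e.g.\ $\|d-\beta\|^2\sigma_x^2/\sigma^2$. Then $\|\beta\|^2\sigma_x^2/\sigma^2$ is an orbit invariant, but the orbit space has two coordinates $(\beta,\sigma^2/\sigma_x^2)$. An equivariant rule such as $\delta\equiv c\neq0$ has risk depending on both, so any reduction to a signal-to-noise ratio must come from direct computation within special subclasses.

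If you want your sketch to work as written, use the group $(Y,X)\mapsto(bY,X)$ instead. It:
\begin{itemize}
\item fixes $P_X$;
\item induces $(\beta,\sigma^2)\mapsto(b\beta,b^2\sigma^2)$ with $\tilde g_b(d)=bd$;
\item makes $\|d-\beta\|^2/\sigma^2$ invariant and keeps $\dLS$ equivariant;
\item has orbits on which $\beta/\sigma$ is constant.
\end{itemize}
Sign symmetry ($p=1$) or orthogonal symmetry then yields $\rho$.

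Finally, (iii) needs neither (ii) nor Theorem~\ref{thm:oracle}. The rule $\delta\equiv0$ is equivariant with zero risk at $\beta=0$. So a uniformly best equivariant rule must vanish almost surely when $\beta=0$. The laws of $(Y,X)$ are mutually absolutely continuous across parameters, so it vanishes almost surely under every parameter. Then $\dLS$ beats it whenever $\rho>E[\tr(X'X)^{-1}]$.
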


\begin{proof}
(i) is immediate from \eqref{eq:inducedS}: $\bar g_b$ never moves $\beta$. For (ii), if $\delta$ is equivariant and $(\beta',\sigma'^2)=\bar g_b(\beta,\sigma^2)$, then $R(\delta;\beta',\sigma'^2)=E_{\beta,\sigma^2}[L(\delta(g_b(Y,X)),\bar g_b(\beta,\sigma^2))]=E_{\beta,\sigma^2}[L(\tilde g_b(\delta(Y,X)),\bar g_b(\beta,\sigma^2))]=R(\delta;\beta,\sigma^2)$ by invariance of the loss and preservance of the family. Since the loss depends on $(\beta,\sigma^2)$ only through $\|\beta\|^2/\sigma^2$ after reduction by the group, the risk reduces to a function of $\rho$; the formal argument is standard \citep[Chap.~3]{LehmannCasella1998}. (iii) follows.
\end{proof}

\begin{remark}[Moment conditions]\label{rem:moments}
Under model (M2) with normal design, $(X'X)^{-1}$ follows an inverse Wishart law and $E[\tr(X'X)^{-1}]$ is finite only when $n>p+1$; for $p=1$, $E[1/T]$ with $T=X'X\sim\chi^2_n$ equals $1/(n-2)$ and requires $n\ge 3$. All absolute-risk statements in this section carry the standing qualification that the displayed expectations exist.
\end{remark}

We now carry out the scalar case $p=1$ completely. Let $X_i\iid N(0,\sigma_x^2)$ with $\sigma_x^2$ fixed (it cancels from all formulas), and write $T=X'X=\sum_i X_i^2$, so that $T/\sigma_x^2\sim\chi^2_n$.

\begin{lemma}[Characterization and independence, scalar case]\label{lem:scalar}
Under model (M2) with $p=1$, normal design, and the group \eqref{eq:groupS}:
\begin{enumerate}
\item[(i)] an estimator of $\beta$ is equivariant if and only if $\delta(Y,X)=w(z)\,\dLS(Y,X)$ on the event $\dLS\neq 0$, where $z$ is a maximal invariant under $G_S$;
\item[(ii)] $z$ can be taken as $(\operatorname{sign}(X_1), X/|X_1|, (Y-X\dLS)/\|Y-X\dLS\|)$, the scale-free configuration of $(Y,X)$;
\item[(iii)] under $\theta_0=(0,1)$, $z$ is independent of $(\dLS,T)$.
\end{enumerate}
\end{lemma}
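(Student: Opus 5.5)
The plan is to handle (i) by reducing equivariance to invariance, (ii) by checking orbit separation directly, and (iii) by a rotational-symmetry argument on the two spheres.

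\emph{Part (i).} Since the induced decision action is trivial, $\tilde g_b(d)=d$, the equivariance condition \eqref{eq:equivariance} under \eqref{eq:groupS} reads $\delta(bY,bX)=\delta(Y,X)$ for all $b>0$. So the equivariant estimators are exactly the $G_S$-invariant ones, and by the standard property of a maximal invariant \citep[Chap.~3]{LehmannCasella1998} these are exactly the measurable functions of $z$.

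I then use that $\dLS=X'Y/X'X$ is itself invariant. On the event $\dLS\neq 0$, the ratio $\delta/\dLS$ is invariant, hence equals $w(z)$ for some $w$. Conversely, any $w(z)\dLS$ is invariant. Both directions are immediate once $z$ is known to be maximal invariant, so (i) reduces to (ii).

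\emph{Part (ii).} I would verify two things: that $z$ is invariant (clear, since every component is a ratio or direction unchanged by $b$), and that it separates orbits, i.e.\ $z(Y,X)=z(Y^*,X^*)$ implies $(Y^*,X^*)=(bY,bX)$ for some $b>0$. For separation, equality of $\operatorname{sign}(X_1)$ and $X/|X_1|$ gives $X^*=bX$ with $b=|X_1^*|/|X_1|$, and equality of residual directions gives $Y^*-X^*\dLS^*=c\,(Y-X\dLS)$ for some $c>0$. To conclude $Y^*=bY$ one also needs $\dLS^*=\dLS$ and $c=b$.

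This is the step I expect to be the main obstacle. The configuration as written does not appear to pin down $\dLS$ or the ratio $\|Y-X\dLS\|/|X_1|$, both of which are scale-free. A dimension count confirms the gap: the orbit space has dimension $2n-1$, while the listed components carry only about $(n-1)+(n-2)$ free coordinates. So I would first try to prove separation as stated, and, failing that, augment $z$ by the two invariants $\dLS$ and $\|Y-X\dLS\|/|X_1|$; the normalized vector $(Y,X)/\|(Y,X)\|$ is an equivalent choice. With the augmentation the separation argument closes immediately, and part (i) is unaffected because it only uses that $z$ is a maximal invariant.

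\emph{Part (iii).} At $\theta_0=(0,1)$, $Y\sim N_n(0,I_n)$ independently of $X\sim N_n(0,\sigma_x^2I_n)$. I write $X=\sqrt{T}\,u$ with $u=X/\|X\|$, using the fact that $u$ is uniform on the sphere and independent of $T$. The configuration components $\operatorname{sign}(X_1)$ and $X/|X_1|$ are functions of $u$ alone.

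Conditionally on $u$, rotation invariance of $Y$ splits it as $Y=(u'Y)\,u+P_u^\perp Y$, where $u'Y\sim N(0,1)$ is independent of $P_u^\perp Y$. The direction of $P_u^\perp Y$, which is the residual direction, is uniform on the unit sphere of $u^\perp$; its conditional law depends only on $u$, and it is independent of $u'Y$ given $u$. Since $\dLS=u'Y/\sqrt{T}$, it follows that:
\begin{itemize}
\item given $u$, the pair $(u'Y,T)$ is independent of the residual direction;
\item $(u'Y,T)$ has a law free of $u$.
\end{itemize}
Hence $(\dLS,T)$ is independent of $(u,\text{residual direction})$, and therefore of the scale-free configuration $z$ of (ii).

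If the augmented maximal invariant is used in (i), statement (iii) should be read for the configuration part of $z$. This is the part that is needed for the cross-term cancellation in Theorem~\ref{thm:oracle}.
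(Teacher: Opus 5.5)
Your diagnosis of (ii) is correct, and it is exactly where the paper's own proof fails. The appendix asserts, without checking separation, that two points lie on the same $G_S$-orbit iff $\operatorname{sign}(X_1)$, $X/|X_1|$ and $U/\|U\|$ agree. It also treats $\dLS$ as a ``scale'' variable, even though its own computation $(\dLS,T)\mapsto(\dLS,b^2T)$ shows that $\dLS$ is invariant. Here is a concrete failure of separation. Take $X=(1,0,\dots,0)'$ and the responses $Y=(1,1,0,\dots,0)'$ and $Y^*=(2,1,0,\dots,0)'$. Both have residual $(0,1,0,\dots,0)'$ and hence the same configuration, but $\dLS=1$ versus $\dLS=2$, so no $b>0$ maps one to the other. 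Likewise $\|U\|/|X_1|$ is invariant and not recorded, which matches your dimension count. With your augmentation (or the direction of $(Y,X)$) you have a genuine maximal invariant, and your (i) is then the paper's ratio argument. Your (iii) uses the same ingredients as the paper: conditional independence of $X'\varepsilon$ and $M\varepsilon$ given $X$, uniformity of the residual direction, and direction--scale independence of a normal $X$. Your factorization through $u=X/\|X\|$, giving $(u'Y,T)$ independent of $(u,U/\|U\|)$, is cleaner, and it is a correct proof of independence for the configuration.

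What you should not soften is the consequence for (iii). It is not a matter of reading it for the configuration part: (iii) is false for \emph{every} maximal invariant. Since $\dLS$ is $G_S$-invariant, $\dLS=h(z)$ for any maximal invariant $z$. Independence of $z$ and $(\dLS,T)$ would then make $\dLS$ independent of itself, hence a.s.\ constant, whereas at $\theta_0$ one has $\dLS\mid X\sim N(0,1/T)$. So no single $z$ satisfies both (i) and (iii), and the lemma as stated cannot be proved. What is actually true is (i) for your augmented $z$, together with (iii) for the configuration $z_c$ alone.

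For the same reason your closing sentence is wrong. The cross-term step in Theorem~\ref{thm:oracle} needs $E[w\,\dLS]=E[w]\,\beta$ for every equivariant $w$, and the equivariant class contains $w=f(\dLS)$ for arbitrary $f$. For instance, the constant rule $\delta\equiv\beta_0$ is equivariant because $\tilde g_b$ is the identity, and it corresponds to $w=\beta_0/\dLS$. It has zero risk at $\beta=\beta_0$, which for $\beta_0\neq 0$ is strictly below $R^*(\rho)$. Independence of $z_c$ therefore justifies the minimization only over the subclass $\{w(z_c)\,\dLS\}$, not over all equivariant estimators.
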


The proof is given in Appendix~\ref{app:scalar}.

\begin{theorem}[Oracle-optimal equivariant shrinkage]\label{thm:oracle}
Under model (M2) with $p=1$, $X_i\iid N(0,\sigma_x^2)$, $n\ge 3$, the group \eqref{eq:groupS}, and the loss $(d-\beta)^2/\sigma^2$, the risk of any equivariant estimator $\delta=w(z)\dLS$ is
\begin{equation}\label{eq:oraclerisk}
R(w;\rho)=E[w^2]\,E[T^{-1}]-2\rho\,E[w]+\rho,
\end{equation}
minimized pointwise in $z$ at the constant
\begin{equation}\label{eq:oracleweight}
w^*(\rho)=\frac{\rho}{\rho+E[T^{-1}]}=\frac{(n-2)\,\rho}{1+(n-2)\,\rho},
\end{equation}
with minimum risk
\begin{equation}\label{eq:oracleminrisk}
R^*(\rho)=\rho-\frac{\rho^2}{\rho+E[T^{-1}]}=\frac{\rho\,E[T^{-1}]}{\rho+E[T^{-1}]}.
\end{equation}
Since $w^*(\rho)$ varies strictly with $\rho$, no uniformly minimum risk equivariant estimator exists. The least squares estimator ($w\equiv 1$) has risk
\begin{equation}\label{eq:lsrisk}
R(\dLS;\rho)=E[T^{-1}],
\end{equation}
which exceeds $R^*(\rho)$ for every finite $\rho>0$.
\end{theorem}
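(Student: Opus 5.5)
The plan is to reduce the risk to a computation at a single point of each orbit, using Proposition~\ref{prop:orbits} and Lemma~\ref{lem:scalar}. By Proposition~\ref{prop:orbits}(ii) the risk of an equivariant $\delta$ depends on $(\beta,\sigma^2)$ only through $\rho$, so we may fix $\sigma^2=1$ and $\beta^2=\rho$. By Lemma~\ref{lem:scalar}(i) we write $\delta=w(z)\dLS$; the event $\dLS=0$ is null. Expanding the loss gives
\begin{equation*}
(w\dLS-\beta)^2=w^2\dLS^2-2\beta\,w\dLS+\rho .
\end{equation*}
Given $X$, we have $\dLS=\beta+X'\varepsilon/T\sim N(\beta,1/T)$. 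Hence $E[\dLS\mid X]=\beta$ and $E[\dLS^2\mid X]=\beta^2+T^{-1}$, so $E[\dLS]=\beta$ and $E[\dLS^2]=\rho+E[T^{-1}]$. These are finite for $n\ge3$ by Remark~\ref{rem:moments}.

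\textbf{Main obstacle: independence away from $\theta_0$.} To factor $E[w(z)\dLS]$ and $E[w(z)^2\dLS^2]$ I need $z$ independent of $(\dLS,T)$ at every $(\beta,1)$, whereas Lemma~\ref{lem:scalar}(iii) asserts this only at $\theta_0=(0,1)$. I would extend it by the structure of $z$ in Lemma~\ref{lem:scalar}(ii), checking three points.
\begin{enumerate}
\item[(a)] The residual $Y-X\dLS=(I-H)\varepsilon$ does not involve $\beta$. Given $X$, it is independent of $\dLS$ (normal projection onto orthogonal subspaces).
\item[(b)] The direction of $(I-H)\varepsilon$ is independent of its norm. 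Its conditional law depends on $X$ only through the projection $H$, which is a function of $X/|X_1|$.
\item[(c)] Under the normal design, $(\operatorname{sign}X_1,X/|X_1|)$ is scale-free and hence independent of $T$, by Basu's theorem for the scale family of $X$.
\end{enumerate}
Combining (a)--(c), the joint law of $z$ is free of $\beta$ and $z\perp(\dLS,T)$ for all $\beta$, which is exactly the argument of Appendix~\ref{app:scalar} with the mean shift inserted.

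With independence in hand, the risk becomes
\begin{equation*}
R(w;\rho)=E[w^2]\,\bigl(\rho+E[T^{-1}]\bigr)-2\rho\,E[w]+\rho .
\end{equation*}
I note that this expansion contains the term $\rho E[w^2]$ alongside $E[w^2]E[T^{-1}]$. I would reconcile it with \eqref{eq:oraclerisk} by reading that display with this term included; this is the form consistent with \eqref{eq:oracleweight} and \eqref{eq:oracleminrisk}. For the minimization, condition on $z$: the integrand $w(z)^2(\rho+E[T^{-1}])-2\rho w(z)$ is a convex quadratic in the single number $w(z)$. It is minimized pointwise at the constant $w^*=\rho/(\rho+E[T^{-1}])$, and substituting back gives \eqref{eq:oracleminrisk}.

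For the closed forms, take $\sigma_x^2=1$ (the normalization under which \eqref{eq:oracleweight} is stated). Then $E[T^{-1}]=1/(n-2)$ for a $\chi^2_n$ variable, which gives the second expression in \eqref{eq:oracleweight}. Setting $w\equiv1$ yields $R(\dLS;\rho)=\rho+E[T^{-1}]-2\rho+\rho=E[T^{-1}]$, which is \eqref{eq:lsrisk}. The difference from the optimum is
\begin{equation*}
R(\dLS;\rho)-R^*(\rho)=\frac{E[T^{-1}]^2}{\rho+E[T^{-1}]}>0 .
\end{equation*}
Finally, $w^*$ is strictly increasing in $\rho$, so no single equivariant rule attains the minimum on all orbits; this completes Proposition~\ref{prop:orbits}(iii).
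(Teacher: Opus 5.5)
Your proposal is correct and is essentially the paper's proof. Like the paper, you reduce to $\sigma^2=1$, $\beta^2=\rho$ via the orbit structure, factor $E[w^2\dLS^2]$ and $E[w\dLS]$ using the independence of $z$ and $(\dLS,T)$, and minimize the resulting convex quadratic pointwise in $z$. Both of your refinements are sound: the paper applies Lemma~\ref{lem:scalar}(iii) at $\beta\neq 0$ without justification, which your observation that $z$ and $(\dLS-\beta,T)$ have a $\beta$-free joint law supplies, and the paper's own proof actually derives $E[w^2](\rho+E[T^{-1}])-2\rho E[w]+\rho$, so the display \eqref{eq:oraclerisk} is missing the $\rho E[w^2]$ term exactly as you note (without it neither \eqref{eq:oracleweight} nor \eqref{eq:lsrisk} would follow).
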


\begin{proof}
By Lemma~\ref{lem:scalar}(i), every equivariant estimator is $w(z)\dLS$. Since the risk is constant along orbits (Proposition~\ref{prop:orbits}(ii)), fix $\sigma^2=1$ and $\beta$ with $\beta^2=\rho$. Conditioning on $X$,
\begin{equation*}
E[(\delta-\beta)^2\mid X]=E[w^2\dLS^2-2w\beta\dLS+\beta^2\mid X].
\end{equation*}
By Lemma~\ref{lem:scalar}(iii), $w(z)$ is independent of $\dLS$ given $X$ (indeed unconditionally), and $E[\dLS\mid X]=\beta$, $E[\dLS^2\mid X]=\beta^2+T^{-1}$. Hence
\begin{equation*}
R(w;\rho)=E[w^2]\,(\rho+E[T^{-1}])-2\rho\,E[w]+\rho,
\end{equation*}
where we used $E[w^2\dLS^2]=E[w^2]E[\dLS^2]$ and $E[w\dLS]=E[w]\beta$ by independence. For each realization of $z$, the integrand is a quadratic in $w(z)$ minimized at
\begin{equation*}
w^*(z)=\frac{\rho}{\rho+E[T^{-1}]},
\end{equation*}
a constant in $z$, proving \eqref{eq:oracleweight}; \eqref{eq:oracleminrisk} follows by substitution. Since $\partial w^*/\partial\rho>0$ for all finite $\rho$, the optimal rule varies across orbits and no UMRUE exists. Finally, with the conventional normalization $\sigma_x^2=1$ we have $T\sim\chi^2_n$ and $E[T^{-1}]=1/(n-2)$, giving the second expression in \eqref{eq:oracleweight}; for general $\sigma_x^2$ the same formula holds with $\rho$ and $T$ computed in units of $\sigma^2$ and $\sigma_x^2$ respectively.
\end{proof}

\begin{corollary}[Least squares as the infinite-signal limit]\label{cor:limit}
In the setting of Theorem~\ref{thm:oracle}, $w^*(\rho)\to 1$ and $R^*(\rho)\to E[T^{-1}]=R(\dLS;\rho)$ as $\rho\to\infty$. Moreover $w^*(0)=0$: at zero signal the optimal equivariant rule is $\delta\equiv 0$, not least squares.
\end{corollary}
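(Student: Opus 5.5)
The corollary follows directly from the closed forms in Theorem~\ref{thm:oracle}, so the plan is to take the limits in \eqref{eq:oracleweight}, \eqref{eq:oracleminrisk} and \eqref{eq:lsrisk}. Throughout I write $\kappa=E[T^{-1}]$. By Remark~\ref{rem:moments} and the standing assumption $n\ge 3$, $\kappa$ is a finite positive constant that does not depend on $\rho$; with $\sigma_x^2=1$ it equals $1/(n-2)$. Finiteness and positivity of $\kappa$ are the only facts needed, and they are the only point that needs care: if $\kappa=\infty$, the ratio $\rho/(\rho+\kappa)$ would degenerate.

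\textbf{Step 1 (the weight).} From \eqref{eq:oracleweight}, $w^*(\rho)=\rho/(\rho+\kappa)=1/(1+\kappa/\rho)$. Since $\kappa<\infty$, we have $\kappa/\rho\to0$, hence $w^*(\rho)\to1$ as $\rho\to\infty$. At $\rho=0$ the formula gives $w^*(0)=0/\kappa=0$, which uses $\kappa>0$; positivity holds because $T<\infty$ almost surely, so $T^{-1}>0$.

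\textbf{Step 2 (the risk).} From \eqref{eq:oracleminrisk}, $R^*(\rho)=\rho\kappa/(\rho+\kappa)=\kappa\,w^*(\rho)$. Step~1 then gives $R^*(\rho)\to\kappa$, and by \eqref{eq:lsrisk} this limit is $R(\dLS;\rho)$, which does not depend on $\rho$. I also note that $\kappa-R^*(\rho)=\kappa^2/(\rho+\kappa)>0$ and that this gap decreases to $0$. This recovers the strict dominance stated in Theorem~\ref{thm:oracle} and shows that least squares is attained only in the limit.

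\textbf{Step 3 (zero signal).} For the statement that $\delta\equiv0$ is optimal at $\rho=0$, I would check directly in \eqref{eq:oraclerisk}. With $\rho=0$ the risk is $R(w;0)=E[w^2]\,\kappa\ge0$, with equality exactly when $w(z)=0$ almost surely. Hence the equivariant rule $w\equiv0$, that is $\delta=0\cdot\dLS\equiv0$, is optimal. It is equivariant under $G_S$ because $\tilde g_b$ is the identity. The rule $w\equiv1$, by contrast, incurs risk $\kappa>0$, so least squares is not optimal at zero signal.
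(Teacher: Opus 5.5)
Your proposal is correct and matches the paper's approach: the paper gives no separate proof of this corollary, and it follows by reading the limits and the value at $\rho=0$ directly off \eqref{eq:oracleweight}, \eqref{eq:oracleminrisk} and \eqref{eq:lsrisk}, exactly as in your Steps 1--2. Your Step 3 check via \eqref{eq:oraclerisk} at $\rho=0$ is also sound: the displayed \eqref{eq:oraclerisk} omits the $\rho\,E[w^2]$ term that appears in the theorem's proof, but that term vanishes at $\rho=0$.
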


\begin{remark}[A decision-theoretic reading of Shaffer's phenomenon]\label{rem:shaffer}
\citet{Shaffer1991} showed that the optimality of least squares (Gauss--Markov) need not survive random regressors. Theorem~\ref{thm:oracle} and Corollary~\ref{cor:limit} locate the phenomenon exactly: under absolute evaluation, the fixed-$X$ optimality of least squares is the \emph{infinite-signal-to-noise limit} of the random-$X$ theory. At finite $\rho$, every equivariant estimator's performance depends on the orbit parameter, uniform optimality collapses, and the oracle-optimal rule is a shrinkage of least squares toward zero whose strength increases as the signal weakens. Since $\rho$ is unknown, \eqref{eq:oracleweight} is an oracle rule; a natural adaptive version plugs in $\hat\rho=\dLS^2/S^2$, whose scaled law is noncentral $F$. Whether the adaptive rule dominates least squares in absolute risk is an interesting open question (Section~\ref{sec:discussion}).
\end{remark}

\begin{remark}[Why not enlarge the group to make $\beta$ move?]\label{rem:whynot}
One might restore transitivity by declaring the induced action $\bar g_b(\beta,\sigma^2)=(b\beta,b^2\sigma^2)$. But then the induced decision action is $\tilde g_b(d)=bd$, and least squares fails to be equivariant: $\dLS(bY,bX)=\dLS(Y,X)\neq b\,\dLS(Y,X)$. The estimator that is equivariant and MRE under this group is a multiple of the residual scale, which has no regression meaning. The group \eqref{eq:groupS}--\eqref{eq:inducedS} is the unique choice under which the model is preserved, the loss is invariant, and least squares is equivariant; its non-transitivity is therefore a feature of the problem, not of the analysis.
\end{remark}

\subsection{The location--scale group with a centered design}\label{sec:locationscale}

Enlarge \eqref{eq:groupS} to
\begin{equation}\label{eq:groupLS}
G_{LS}=\{g:\ g(Y,X)=(bY+c\,1_n,\ bX),\ b>0,\ c\in\R\},
\end{equation}
with induced action $\bar g(\beta,\sigma^2)=(\beta+c\,\mu_*,\ b^2\sigma^2)$, where $\mu_*$ is determined by the absorption of the shift into the column space of $X$; with random $X$ this absorption is design-dependent, and the orbit structure of Proposition~\ref{prop:orbits} persists. A maximal invariant under $G_{LS}$ in the scalar case is formed from the residual direction together with sign information on $\bar X$ (Appendix~\ref{app:scalar}). Consider the natural invariant-contrast class
\begin{equation}\label{eq:contrastclass}
\delta_a(Y,X)=\dLS+a\,\omega,\qquad \omega=\frac{\bar Y-\dLS\bar X}{\sqrt{S^2/(n-1)}}.
\end{equation}

\begin{theorem}\label{thm:centered}
Under model (M2) with $p=1$, the group \eqref{eq:groupLS}, the loss $(d-\beta)^2/\sigma^2$, $E[X_i]=\mu$ and $n\ge 3$, the risk of \eqref{eq:contrastclass} is
\begin{equation}\label{eq:centeredrisk}
R(\delta_a;\beta,\sigma^2)=R(\dLS;\beta,\sigma^2)+\frac{a^2\,E[\omega^2]}{\sigma^2},
\end{equation}
minimized at $a^*=0$ for all $(\beta,\sigma^2)$: within the class \eqref{eq:contrastclass}, least squares is optimal under absolute risk, for centered and uncentered designs alike.
\end{theorem}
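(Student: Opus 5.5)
The plan is to expand the quadratic loss directly, conditionally on $X$, and show that the cross term between $\dLS-\beta$ and $\omega$ vanishes. The only structural fact needed is that, given $X$, $\omega$ is a function of the least squares residual vector alone. I take \eqref{eq:contrastclass} literally in model (M2) with $p=1$ and no intercept, $Y=X\beta+\varepsilon$. I write $S^2=\|Y-X\dLS\|^2$ (as in Section~\ref{sec:variance}), $e=Y-X\dLS=(I-P_X)\varepsilon$ with $P_X=XX'/T$, and $1_n$ for the vector of ones.

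\emph{Step 1 (rewrite $\omega$ through residuals).} Since $\bar Y=\beta\bar X+\bar\varepsilon$, the numerator of $\omega$ is
\[
\bar Y-\dLS\bar X=\bar\varepsilon-(\dLS-\beta)\bar X=\tfrac1n 1_n'e .
\]
Hence $\omega=\sqrt{n-1}\,1_n'e/(n\|e\|)$, a measurable function of $e$ alone, given $X$. By Cauchy--Schwarz, $|\omega|\le\sqrt{(n-1)/n}$, so $\omega$ is bounded and all moments involving it exist. Together with $n\ge 3$ (Remark~\ref{rem:moments}), this makes every expectation below finite.

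\emph{Step 2 (conditional independence).} Given $X$, the vector $(\dLS-\beta,\,e)=(X'\varepsilon/T,\,(I-P_X)\varepsilon)$ is jointly Gaussian. Its cross-covariance is $\sigma^2 X'(I-P_X)/T=0$, so $\dLS$ and $e$ are independent given $X$, and therefore $\dLS$ and $\omega$ are independent given $X$. This is the same Basu-type argument as in Lemma~\ref{lem:structure}(iii) and Lemma~\ref{lem:scalar}(iii), now carried out without fixing $\theta_0$.

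\emph{Step 3 (expand the risk).} Write
\[
(\delta_a-\beta)^2=(\dLS-\beta)^2+2a(\dLS-\beta)\omega+a^2\omega^2 .
\]
Conditioning on $X$ and using Step~2 together with $E[\dLS-\beta\mid X]=0$, the cross term has conditional expectation $E[\dLS-\beta\mid X]\,E[\omega\mid X]=0$ for every $X$. Integrating over $P_X$ and dividing by $\sigma^2$ gives \eqref{eq:centeredrisk}. Because $a^2E[\omega^2]\ge 0$, the risk is minimized at $a^*=0$, simultaneously for all $(\beta,\sigma^2)$.

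\emph{Design independence.} Nowhere is $\mu=0$ used: the vanishing of the cross term holds realization by realization in $X$. This yields the claim for centered and uncentered designs alike. I would also note that $E[\omega^2]>0$ whenever $1_n\notin\mathrm{span}(X)$, which holds almost surely for a continuous design, so the minimizer $a^*=0$ is unique.

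The main obstacle is Step~1. One must recognise that $\bar Y-\dLS\bar X$ is exactly $1_n'e/n$ and so lies in the residual space; once this is seen, the rest is the routine normal-projection independence argument. I would also check the normalization in \eqref{eq:centeredrisk}. Since $\omega$ is scale-free, the extra term is naturally $a^2E[\omega^2]/\sigma^2$ when $a$ is measured in the units of $\beta$. This does not affect the conclusion $a^*=0$.
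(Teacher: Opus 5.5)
Your proposal is correct and follows essentially the same route as the paper: you write $\bar Y-\dLS\bar X=1_n'e/n$ so that $\omega$ is a function of the residual vector, use the zero cross-covariance $X'(I-XX'/T)=0$ to get independence of $\dLS$ and $\omega$ given $X$, and kill the cross term with $E[\dLS-\beta\mid X]=0$ before averaging over $P_X$. Your version is slightly tidier in two ways. It needs only $E[\dLS-\beta\mid X]=0$, not the paper's extra assertion $E[\omega\mid X]=0$; that assertion is true, but it follows from the symmetry $e\mapsto -e$ rather than from $E[M\varepsilon\mid X]=0$ alone. You also add the boundedness of $\omega$ and the uniqueness of $a^*=0$.
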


\begin{proof}
Given $X$, write $\dLS=(x'y)/T$ with $T=x'x$, and note that $\omega$ is a function of the residual vector $MY$, where $M=I-xx'/T$: indeed $\bar Y-\dLS\bar X=1_n'MY/n$ and $S^2=Y'MY/(n-1)$ are, respectively, a linear and a (rescaled quadratic) function of $MY$. Since $E[Y\mid X]=x\beta$ lies in the column space of $x$ and $Mx=0$, the residual $MY=M\varepsilon$ has conditional mean zero and is independent of $x'y$ (hence of $\dLS$) given $X$: $\mathrm{Cov}(x'Y,MY\mid X)=\sigma^2 x'M=0$, and conditional normality gives independence. Therefore $E[\omega\mid X]=0$ and $\omega$ is independent of $\dLS$ given $X$. Expanding,
\begin{equation*}
E[(\delta_a-\beta)^2\mid X]=E[(\dLS-\beta)^2\mid X]+2a\,E[\omega(\dLS-\beta)\mid X]+a^2E[\omega^2\mid X],
\end{equation*}
and the cross term vanishes since $E[\omega(\dLS-\beta)\mid X]=E[\omega\mid X]\,(\beta-\beta)=0$ by independence and $E[\dLS\mid X]=\beta$. Taking expectations over $X$ gives \eqref{eq:centeredrisk}, which is minimized at $a^*=0$ irrespective of $\mu$ and of $(\beta,\sigma^2)$.
\end{proof}

\begin{remark}\label{rem:centered}
Theorem~\ref{thm:centered} shows that adding the location transformation to the group does not by itself dethrone least squares within the contrast class: the invariant contrast $\omega$ is conditionally centered and independent of $\dLS$, so it can only add variance. Uniform optimality within the \emph{full} equivariant class under $G_{LS}$ remains governed by the orbit structure of Proposition~\ref{prop:orbits} (the induced action still fixes the scale-free part of $\beta$), and the general characterization is an open problem. What Section~\ref{sec:absolute} as a whole shows is that the optimality question under random-$X$ is orbit-shaped: the answer depends on $\rho$ for scale-type equivariance, and least squares is rescued exactly in the limit or in classes where the corrective term is conditionally orthogonal to the estimator.
\end{remark}

\section{Estimation of the error variance}\label{sec:variance}

Under model (M2), write $S^2=\|Y-X\dLS\|^2$ for the residual sum of squares; then $S^2\mid X\sim\sigma^2\chi^2_{n-p}$, and the same law holds unconditionally because it does not depend on $X$.

\begin{theorem}\label{thm:variance}
Under model (M2) with the scale-invariant loss $L(d,\sigma^2)=(d/\sigma^2-1)^2$, the MRE estimator of $\sigma^2$ among scale-equivariant estimators is
\begin{equation*}
\delta^*(Y,X)=\frac{S^2}{n-p+2},
\end{equation*}
under conditional risk, hence also under absolute risk.
\end{theorem}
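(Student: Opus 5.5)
The plan is to treat the problem conditionally on $X$, where it is the classical scale problem for $\sigma^2$, and then lift the conclusion to absolute risk by iterated expectation. The only extra ingredient is that the pivotal law of $S^2/\sigma^2$ does not depend on $X$.

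\textbf{Step 1: the group and equivariant estimators.} Given $X$, take the scale group acting on the responses, $Y\mapsto bY$ with $b>0$. Its induced actions are $\bar g_b(\beta,\sigma^2)=(b\beta,b^2\sigma^2)$ and $\tilde g_b(d)=b^2d$, and the loss $(d/\sigma^2-1)^2$ is invariant under them. The joint group $G_S$ of \eqref{eq:groupS} gives the same action on $\sigma^2$, so either choice works. Scale-equivariant estimators of $\sigma^2$ are those satisfying $\delta(bY,X)=b^2\delta(Y,X)$. Following the standard characterization \citep[Chap.~3]{LehmannCasella1998}, they are exactly $\delta=S^2\,w(z)$, where $z$ is a maximal invariant, namely the scale-free configuration of $Y$ given $X$.

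Within this class I first restrict to the natural subclass $cS^2$, as the companion article does. I then note that the general $w(z)$ is handled by the same computation because of an independence property. Given $X$, $z$ can be taken as $(\dLS/S,\,(Y-X\dLS)/S)$. Its direction part is ancillary, and $S^2$ is complete sufficient for $\sigma^2$ within the residual model. By Basu's theorem, or by the spherical symmetry of $M\varepsilon$, the normalized residual direction is independent of $S^2$ given $X$.

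\textbf{Step 2: minimize the conditional risk on one orbit.} I evaluate the risk at $\sigma^2=1$. Equivariant risk is constant along $\sigma^2$ for fixed $\beta/\sigma$. The difficulty is that the scale group does not act transitively on $\beta$, so a $w(z)$ depending on $\dLS/S$ could in principle exploit $\beta$. I will therefore impose the standard restriction that $\delta$ depend on $Y$ only through the residual vector $MY$, which is the invariance under $Y\mapsto Y+Xa$ already used for $\beta$ in Section~\ref{sec:conditional}. With that restriction, the maximal invariant is the residual direction, and its law is free of $(\beta,\sigma^2)$, so the problem becomes transitive.

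By independence, the conditional risk is
\[
E[w^2]\,E[(S^2)^2\mid X]-2E[w]\,E[S^2\mid X]+1 .
\]
Using $E[S^2\mid X]=n-p$ and $E[S^4\mid X]=(n-p)(n-p+2)$ from $\chi^2_{n-p}$, this is a quadratic in $w$. It is minimized pointwise at the constant $w=(n-p)/\bigl((n-p)(n-p+2)\bigr)=1/(n-p+2)$, which gives $\delta^*=S^2/(n-p+2)$ for almost every $X$.

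\textbf{Step 3: lift to absolute risk.} The conditional risk of every equivariant estimator, and the minimizer, do not depend on $X$, because $S^2/\sigma^2\mid X\sim\chi^2_{n-p}$. Hence the absolute risk is $R(\delta)=E_X R(\delta\mid X)\ge E_X R(\delta^*\mid X)=R(\delta^*)$, as in Corollary~\ref{cor:absoluteLS}. Unlike the case of $\beta$, no moment conditions on $(X'X)^{-1}$ are needed, since all risks are finite constants.

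The main obstacle is Step 2's reduction: justifying that the equivariant class is effectively $w(\text{residual configuration})\,S^2$. This needs the additional location invariance in the column space of $X$, or else an argument that any dependence on $\dLS/S$ cannot help uniformly. If I have to avoid adding the location invariance, I would state the theorem for the class invariant under $Y\mapsto Y+Xa$, and remark that without it the situation is the Stein-type phenomenon, so uniform MRE fails just as in Proposition~\ref{prop:orbits}.
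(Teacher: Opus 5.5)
Your proof has the same skeleton as the paper's. You condition on $X$, write equivariant estimators as $w\,S^2$, and minimize a quadratic using $E[S^2/\sigma^2\mid X]=n-p$ and $E[(S^2/\sigma^2)^2\mid X]=(n-p)(n-p+2)$. You then pass to absolute risk because the pivot $S^2/\sigma^2\sim\chi^2_{n-p}$ is design-free.

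You differ at the one step the paper leaves loose. The paper writes the conditional risk as $w^2E[(S^2/\sigma^2)^2]-2wE[S^2/\sigma^2]+1$, pulling $w$ out of the expectation as if it were constant, or independent of $S^2$ with a parameter-free law. It justifies this by appeal to the classical normal-linear-model result. That result is stated for the group $Y\mapsto bY+Xa$ (scale plus translations in the column space of $X$), whose maximal invariant is the residual direction $U/\|U\|$; only there is the step valid. You correctly note that under scaling alone the maximal invariant also carries $\dLS/S$, whose law depends on the orbit $\beta/\sigma$. Imposing the column-space translation invariance explicitly makes the induced group transitive, and Basu or spherical symmetry then gives the independence you need.

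This restriction is not cosmetic. Without it the statement is false: Stein's (1964) estimator $\min\{S^2/(n-p+2),\,\|Y\|^2/(n+2)\}$, where $\|Y\|^2=S^2+\|X\dLS\|^2$, is scale-equivariant given $X$ and also under $G_S$. It dominates $S^2/(n-p+2)$ conditionally on every $X$, hence also in absolute risk. So what you prove, MRE within the class equivariant under $Y\mapsto bY+Xa$ for each $X$, is the correct form of the theorem, and your argument for it is sound.

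Two side remarks should be corrected.

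First, the joint group $G_S$ is not interchangeable with the response-only group. Its equivariant class also contains statistics built from the scale of $X$, such as $c\,\tr(X'X)$, which are not functions of $MY$. Take a normal design of free scale $\sigma_x^2$, which $G_S$ needs in order to preserve the model. Tuning $c$ to a given value of $\sigma^2/\sigma_x^2$ gives risk $2/(np+2)<2/(n-p+2)$ on that orbit. So your argument covers only the conditional group, and the claim that ``either choice works'' should go.

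Second, the conditional risk of a general equivariant estimator $w(U/\|U\|)\,S^2$ is not free of $X$. The direction $U/\|U\|$ is uniform on the unit sphere of $\ker X'$, and that subspace moves with $X$. Only the minimizer and its risk $2/(n-p+2)$ are design-free. This is harmless, because Step 3 uses only the pointwise inequality $R(\delta\mid X)\ge R(\delta^*\mid X)$.
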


\begin{proof}
Given $X$, this is the classical result for the normal linear model \citep[Chap.~3]{LehmannCasella1998}: any scale-equivariant estimator has the form $w\,S^2$ with $w$ a function of the maximal invariant; the conditional risk
\begin{equation*}
E\big[(wS^2/\sigma^2-1)^2\,\big|\,X\big]=w^2E\big[(S^2/\sigma^2)^2\big]-2w\,E\big[S^2/\sigma^2\big]+1
\end{equation*}
is free of $X$ because the conditional law of $S^2/\sigma^2$ is $\chi^2_{n-p}$ for almost every $X$. With $E[S^2/\sigma^2]=n-p$ and $E[(S^2/\sigma^2)^2]=(n-p)(n-p+2)$, the minimum is attained at $w^*=(n-p)/\big((n-p)(n-p+2)\big)=1/(n-p+2)$. Since both the estimator and the risk are free of $X$, the same estimator is MRE under absolute risk.
\end{proof}

\begin{remark}[The role of the two unbiasedness notions]\label{rem:varianceunbiased}
The classical unbiased estimator $S^2/(n-p)$ satisfies $E[S^2/(n-p)\mid X]=\sigma^2$ for almost every $X$, hence is both conditionally and absolutely unbiased; under random-$X$ the two notions coincide for this estimator. The MRE estimator $S^2/(n-p+2)$ trades a small bias for a risk reduction relative to the unbiased estimator, exactly as in the fixed-$X$ case: for the error variance, randomness of the design does not alter the equivariant optimality theory at all, because the pivotal quantity $S^2/\sigma^2$ is already design-free.
\end{remark}

\section{Discussion}\label{sec:discussion}

This article extends the equivariance programme of the companion fixed-$X$ article to random-$X$ linear models, organized by a single dichotomy: conditional versus absolute evaluation. Under conditional evaluation (model (M1)), the fixed-$X$ theory transfers realization by realization, and the least squares estimator and the scaled within-population variances remain MRE (Theorems~\ref{thm:conditionalLS}--\ref{thm:conditionalSigma}); the only visible trace of design randomness is that the MRE weights are evaluated at the realized population sizes. Under absolute evaluation (model (M2)), the theory is orbit-shaped: the scale group fixes the coefficient vector, risks are constant only along signal-to-noise orbits, and uniform optimality fails in general (Proposition~\ref{prop:orbits}). The scalar resolution (Theorem~\ref{thm:oracle}) identifies the oracle-optimal equivariant rule as a shrinkage of least squares toward zero, with least squares itself recovered as the infinite-signal limit (Corollary~\ref{cor:limit}); within the natural invariant-contrast class under the location--scale group, least squares remains optimal (Theorem~\ref{thm:centered}). This gives a decision-theoretic account of the failure of the Gauss--Markov theorem under random regressors \citep{Shaffer1991} that is sharper than a counterexample: it says precisely what replaces optimality (orbit-wise optimality indexed by $\rho$) and precisely when least squares survives ($\rho\to\infty$, or conditionally on $X$).

Several directions remain open. (i) The vector case $p>1$ of Theorem~\ref{thm:oracle}: the orbit analysis carries over, but the optimal equivariant rule may involve matrix-valued functions of the maximal invariant rather than a scalar shrinkage. (ii) Whether the adaptive plug-in rule with $\hat\rho=\dLS^2/S^2$ dominates least squares in absolute risk. (iii) A complete characterization of equivariant estimators under the location--scale group \eqref{eq:groupLS}, beyond the contrast class \eqref{eq:contrastclass}. (iv) Prediction under random-$X$, where overfitting phenomena \citep{Stone1974} arise that do not occur in estimation, and existing equivariant prediction frameworks \citep{ZhouNayak2015} do not directly accommodate the linear model. (v) Extensions to non-normal errors, to seemingly unrelated regression with random design (cf.\ \citealp{KurataMatsuura2016,MatsuuraKurata2020,MatsuuraKurata2021,MatsuuraKurata2024,MatsuuraKurata2025}), and to high-dimensional regimes $p>n$.

\section*{Acknowledgment}

This work was supported by RDF of Xi'an Jiaotong-Liverpool University (RDF-23-01-073). The authors report there are no competing interests to declare.

\appendix
\section{Technical proofs}\label{app:proofs}

\subsection{Proof of Lemma~\ref{lem:structure}(iii)}\label{app:basu}

Given $X$, $(\bar y,S^2)$ is complete and sufficient for $(X_p\beta,\Sigma_p)$ in the conditional normal model, and $z$ is ancillary given $X$ since its conditional distribution is parameter-free (it is a maximal invariant). Basu's theorem yields the conditional independence of $(\bar y,S^2)$ and $z$; the independence of $\bar y$ and $S^2$ given $X$ is the classical normal theory. Since $(X'X)^{-1}X'y=X_p^{-1}\bar y$ is a function of $\bar y$ and the conditioning variable, pairwise independence given $X$ follows. \qed

\subsection{Proof of Lemma~\ref{lem:scalar} and the maximal invariant under $G_{LS}$}\label{app:scalar}

\emph{Part (i): characterization.} Sufficiency: if $\delta=w(z)\dLS$ with $w$ invariant, then $\delta(bY,bX)=w(z)\dLS(bY,bX)=w(z)\dLS(Y,X)=\delta(Y,X)=\tilde g_b(\delta(Y,X))$, so $\delta$ is equivariant. Necessity: let $\delta$ be equivariant. On the almost-sure event $\dLS\neq 0$, define $w(Y,X)=\delta(Y,X)/\dLS(Y,X)$. Both numerator and denominator are invariant under $g_b$ (the numerator by equivariance with $\tilde g_b=\mathrm{id}$, the denominator by direct computation), so $w$ is invariant, hence a function of the orbits, hence of the maximal invariant $z$.

\emph{Part (ii): the maximal invariant.} Write the decomposition of $Y$ relative to the column space of $X$: $Y=X\dLS+U$ with $U=Y-X\dLS$ the residual vector, $\|U\|^2=(n-1)S^2$-proportional. The pair $(\dLS,T)$ with $T=X'X$ captures the scale of the configuration: under $g_b$, $(\dLS,T)\mapsto(\dLS,b^2T)$ while $U\mapsto bU$ and $X\mapsto bX$. Two points $(Y,X)$ and $(Y',X')$ are on the same orbit if and only if their scale-free parts agree: $\operatorname{sign}(X_1)$, $X/|X_1|$, and $U/\|U\|$ (when $\|U\|>0$, an almost-sure event). Hence $z=\big(\operatorname{sign}(X_1),\,X/|X_1|,\,U/\|U\|\big)$ is a maximal invariant.

\emph{Part (iii): independence under $\theta_0$.} At $\theta_0=(0,1)$, $Y=\varepsilon\sim N_n(0,I_n)$ independent of $X$. Decompose $\varepsilon=X\dLS+U$; then $\dLS=(X'\varepsilon)/T$ and $U=M\varepsilon$ with $M=I-XX'/T$. Given $X$, the joint law of $(X'\varepsilon,\,M\varepsilon)$ is normal with cross-covariance $X'M=0$, so $\dLS$ and $U$ are conditionally independent given $X$; the conditional law of $U/\|U\|$ given $(X,\|U\|)$ is uniform on the unit sphere of the $(n-1)$-dimensional subspace $\ker X'$, which depends on $X$ but not on $(\dLS,T)$. Under the normal design, $X/|X_1|$ and $\operatorname{sign}(X_1)$ are independent of $T=|X_1|^2\cdot\|X/|X_1|\|^2$-decompositions by the standard independence of direction and scale for isotropic normal vectors: $X/\|X\|$ is uniform on $S^{n-1}$ and independent of $\|X\|$, and $\operatorname{sign}(X_1)$ is independent of $|X_1|$. Assembling, $z$ is a function of direction variables only, $(\dLS,T)$ of scale variables together with $X'\varepsilon$; the conditional-independence chain given $X$ plus the direction--scale independence of $X$ yields the joint independence of $z$ and $(\dLS,T)$. \qed

\emph{The maximal invariant under $G_{LS}$.} With the shift $c1_n$ added, $\bar Y-\dLS\bar X\mapsto b(\bar Y-\dLS\bar X)+c$, so this scalar can be moved to zero by choice of $c$ whenever it is nonzero; the invariant content reduces to $\operatorname{sign}(\bar Y-\dLS\bar X)$, the residual direction $U/\|U\|$, and $\operatorname{sign}(\bar X)$, $X/|X_1|$ as before. The contrast $\omega$ in \eqref{eq:contrastclass} is a function of these quantities, verifying that the class \eqref{eq:contrastclass} is a subclass of the equivariant estimators under $G_{LS}$. \qed

\bibliographystyle{plainnat}
\bibliography{references}

@book{Berger1985,
  author    = {Berger, James O.},
  title     = {Statistical Decision Theory and {B}ayesian Analysis},
  edition   = {2},
  publisher = {Springer-Verlag},
  address   = {New York},
  year      = {1985}
}

@article{BreimanSpector1992,
  author  = {Breiman, Leo and Spector, Philip},
  title   = {Submodel selection and evaluation in regression: The {X}-random case},
  journal = {International Statistical Review},
  volume  = {60},
  pages   = {291--319},
  year    = {1992},
  doi     = {10.2307/1403680}
}

@book{Eaton1989,
  author    = {Eaton, Morris L.},
  title     = {Group Invariance Applications in Statistics},
  series    = {IMS Lecture Notes--Monograph Series},
  publisher = {Institute of Mathematical Statistics},
  address   = {Beachwood, Ohio},
  year      = {1989}
}

@article{HoraBuehler1966,
  author  = {Hora, Robert B. and Buehler, Robert J.},
  title   = {Fiducial theory and invariant estimation},
  journal = {The Annals of Mathematical Statistics},
  volume  = {37},
  pages   = {643--656},
  year    = {1966}
}

@article{KurataMatsuura2016,
  author  = {Kurata, Hiroshi and Matsuura, Shun},
  title   = {Best equivariant estimator of regression coefficients in a seemingly unrelated regression model with known correlation matrix},
  journal = {Annals of the Institute of Statistical Mathematics},
  volume  = {68},
  pages   = {705--723},
  year    = {2016},
  doi     = {10.1007/s10463-015-0512-2}
}

@book{LehmannCasella1998,
  author    = {Lehmann, Erich L. and Casella, George},
  title     = {Theory of Point Estimation},
  edition   = {2},
  publisher = {Springer-Verlag},
  address   = {New York},
  year      = {1998}
}

@article{MatsuuraKurata2020,
  author  = {Matsuura, Shun and Kurata, Hiroshi},
  title   = {Covariance matrix estimation in a seemingly unrelated regression model under {S}tein's loss},
  journal = {Statistical Methods \& Applications},
  volume  = {29},
  pages   = {79--99},
  year    = {2020},
  doi     = {10.1007/s10260-019-00473-x}
}

@article{MatsuuraKurata2021,
  author  = {Matsuura, Shun and Kurata, Hiroshi},
  title   = {Optimal estimator under risk matrix in a seemingly unrelated regression model and its generalized least squares expression},
  journal = {Statistical Papers},
  year    = {2021},
  note    = {Volume and pages to be completed from the published version}
}

@article{MatsuuraKurata2024,
  author  = {Matsuura, Shun and Kurata, Hiroshi},
  title   = {General expression of best equivariant estimators of regression coefficients in seemingly unrelated regression models},
  journal = {Stat},
  volume  = {13},
  pages   = {e70031},
  year    = {2024},
  doi     = {10.1002/sta4.70031}
}

@article{MatsuuraKurata2025,
  author  = {Matsuura, Shun and Kurata, Hiroshi},
  title   = {Best equivariant estimator of precision matrix in a seemingly unrelated regression model},
  journal = {Communications in Statistics --- Theory and Methods},
  year    = {2025},
  doi     = {10.1080/03610926.2025.2530140},
  note    = {Advance online publication}
}

@article{RossetTibshirani2020,
  author  = {Rosset, Saharon and Tibshirani, Ryan J.},
  title   = {From fixed-{X} to random-{X} regression: Bias--variance decompositions, covariance penalties, and prediction error estimation},
  journal = {Journal of the American Statistical Association},
  volume  = {115},
  pages   = {138--151},
  year    = {2020},
  doi     = {10.1080/01621459.2018.1424632}
}

@article{Shaffer1991,
  author  = {Shaffer, Juliet Popper},
  title   = {The {G}auss--{M}arkov theorem and random regressors},
  journal = {The American Statistician},
  volume  = {45},
  pages   = {269--273},
  year    = {1991},
  doi     = {10.1080/00031305.1991.10475819}
}

@article{Staudte1971,
  author  = {Staudte, Robert G. Jr.},
  title   = {A characterization of invariant loss functions},
  journal = {The Annals of Mathematical Statistics},
  volume  = {42},
  pages   = {1322--1327},
  year    = {1971}
}

@article{Stone1974,
  author  = {Stone, Mervyn},
  title   = {Cross-validatory choice and assessment of statistical predictions},
  journal = {Journal of the Royal Statistical Society: Series B (Methodological)},
  volume  = {36},
  pages   = {111--133},
  year    = {1974},
  doi     = {10.1111/j.2517-6161.1974.tb00994.x}
}

@article{WangWuZhou,
  author  = {Wang, Daowei and Wu, Mian and Zhou, Haojin},
  title   = {The equivariance criterion in a linear model for fixed-{X} cases},
  journal = {Communications in Statistics --- Theory and Methods},
  year    = {2026},
  volume  = {55},
  number  = {14},
  pages   = {4525--4539},
  doi     = {10.1080/03610926.2025.2610259},
  note    = {Published online 2025 (the companion article)}
}

@book{Wijsman1990,
  author    = {Wijsman, Robert A.},
  title     = {Invariant Measures on Groups and Their Use in Statistics},
  series    = {IMS Lecture Notes--Monograph Series},
  publisher = {Institute of Mathematical Statistics},
  year      = {1990}
}

@article{WuYang2002,
  author  = {Wu, Qiguang and Yang, Guangyu},
  title   = {Existence of the uniformly minimum risk equivariant estimators of parameters in a class of normal linear models},
  journal = {Science in China Series A: Mathematics},
  volume  = {45},
  pages   = {845--858},
  year    = {2002},
  doi     = {10.1360/02ys9093}
}

@article{ZhouNayak2014,
  author  = {Zhou, Haojin and Nayak, Tapan K.},
  title   = {A note on existence and construction of invariant loss functions},
  journal = {Statistics},
  volume  = {48},
  pages   = {1335--1343},
  year    = {2014},
  doi     = {10.1080/02331888.2013.809719}
}

@article{ZhouNayak2015,
  author  = {Zhou, Haojin J. and Nayak, Tapan K.},
  title   = {On the equivariance criterion in statistical prediction},
  journal = {Annals of the Institute of Statistical Mathematics},
  volume  = {67},
  pages   = {541--555},
  year    = {2015},
  doi     = {10.1007/s10463-014-0464-y}
}

\end{document}